\def\ot{\leftarrow}
\let\cal\mathcal
\def\cC{{\cal C}}
\def\cD{{\cal D}}
\def\cI{{\cal I}}
\def\cJ{{\cal J}}
\def\cP{{\cal P}}
\def\cQ{{\cal Q}}
\def\cR{{\cal R}}
\def\cS{{\cal S}}
\let\blb\mathbb
\def \ZZ{{\blb Z}}
\def \TT{{\blb T}}
\def \Rl{{\blb R}}
\def \VV{{\blb V}}
\def \SS{{\blb S}}
\newcommand{\se}[1]{\begin{equation*}\begin{split}#1\end{split}\end{equation*}}
\newcommand{\del}{\cancel}
\newcommand{\C}{\mathbb{C}}
\newcommand{\N}{\mathbb{N}}
\newcommand{\Z}{\mathbb{Z}}
\newcommand{\R}{\mathbb{R}}
\newcommand{\wt}{*+{\circ}}
\newcommand{\bk}{*+{\bullet}}
\newcommand{\grp}[1]{\mathsf{#1}}
\newcommand{\vtx}[1]{*+[o][F-]{\scriptscriptstyle #1}}
\newcommand{\Tr}{\textrm{Tr}}
\newcommand{\cycle}{\circlearrowright}
\newcommand{\srep}{\ensuremath{\mathsf{srep}}}
\newcommand{\<}{\langle}
\renewcommand{\>}{\rangle}
\newcommand{\lift}{\ensuremath{\mathsf{lift}}}
\newcommand{\Mod}{\ensuremath{\mathsf{Mod}}}
\newcommand{\Mor}{\ensuremath{\mathsf{Mor}}}
\newcommand{\coeff}{\text{cf}}
\newcommand{\Ext}{\mathsf{Ext}}
\newcommand{\Eqv}{\mathsf{Eqv}}
\newcommand{\Inv}{\mathsf{Inv}}
\newcommand{\Dim}{\mathsf{Dim \,}}
\newcommand{\image}{\mathsf{Im \,}}
\newtheorem{lemma}{Lemma}[section]
\newtheorem{theorem}[lemma]{Theorem}
\newtheorem{corollary}[lemma]{Corollary}
\newtheorem{property}[lemma]{Property}
\theoremstyle{definition}
\newtheorem{example}[lemma]{Example}
\newtheorem{definition}[lemma]{Definition}
\theoremstyle{remark}
\newtheorem{remark}[lemma]{Remark}
\newcommand{\trep}{\ensuremath{\mathsf{trep}}}
\newcommand{\Trep}{\ensuremath{\mathsf{trep}}}
\newcommand{\Mat}{\mathsf{Mat}}
\newcommand{\Hom}{\textrm{Hom}}
\newcommand{\Ker}{\textrm{Ker}}
\newcommand{\GL}{\ensuremath{\mathsf{GL}}}
\newcommand{\qpol}{\cQ}
\newcommand{\carr}{\mathtt R}
\newcommand{\ccyc}{\mathtt E}
\title{Calabi-Yau algebras and weighted quiver polyhedra}
\author{Raf Bocklandt}
\address{Raf Bocklandt\\
School of Mathematics and Statistics\\
Herschel Building\\
Newcastle University\\
Newcastle upon Tyne\\
NE1 7RU\\
UK}
\email{raf.bocklandt@gmail.com}
\begin{document}
\begin{abstract}
Dimer models have been used in string theory to construct path algebras with relations
that are $3$-dimensional Calabi-Yau Algebras. These constructions result in algebras that share some specific 
properties: they are finitely generated modules over their centers and their representation spaces are toric
varieties. In order to describe these algebras we introduce the notion of a toric order and
show that all toric orders which are $3$-dimensional Calabi-Yau algebras can be constructed from dimer models on a torus.

Toric orders are examples of a much broader class of algebras: positively graded cancellation algebras. 
For these algebras the CY-3 condition implies the existence of a weighted quiver polyhedron, which is an extension of dimer models obtained by replacing the torus with any two-dimensional compact orientable orbifold. 
\end{abstract}
\maketitle

\section{Introduction}\label{intro}
Calabi-Yau algebras play an important role in theoretical physics because their derived categories can be used to
describe brane configurations in the $B$-model of topological string theory. There are several ways to construct examples of this kind of algebras such as McKay correspondence \cite{GB, CT} or exceptional 
sequences \cite{aspin}. Another important construction method are dimer models \cite{HanKen,FranHan,HanHer}. A dimer model $\cD$ consists of a bipartite graph (with black and white vertices) that is embedded in a compact surface. 
The corresponding algebra $A_\cD$ is the path algebra with relations of the dual graph oriented such that a cycle around
a black (white) vertex has a (anti-)clockwise orientation. The relations come from the partial derivatives of a superpotential which is the sum of all clockwise cycles minus the sum of all anti-clockwise cycles.

It was shown by Nathan Broomhead in \cite{broomhead}, by Sergey Mozgovoy and Markus Reineke in \cite{MR} and by Ben Davison in \cite{davison} that if the dimer model satisfies certain consistency conditions, the algebra $A_\cD$ is a $3$-dimensional Calabi-Yau Algebra. 

In this paper we will show why dimer models appear in this setting and to which extent they arise from the Calabi-Yau property.

The Calabi-Yau algebras that one obtains from dimer models on a torus share quite specific properties.
They are meant to be noncommutative toric resolutions of a toric variety and therefore
these algebras are prime and finitely generated modules over their centers, which are the coordinate rings of the
affine toric varieties one wishes to resolve. The fact that the resolution is supposed to be toric implies that
the algebra is a positively graded subalgebra of $\Mat_n(T)$ where $T=\C[\Z^k]$ is the coordinate ring of the torus inside the toric variety. 
We will call any such algebra a toric order and discuss how they fit in the notion of a noncommutative crepant resolution as introduced by Van den Bergh.

In this paper we will prove that if a positively graded toric order is CY-3 then it comes from a dimer model on a torus.
The way we prove this result is by generalizing both sides and proving a similar theorem in this generalized context.
On the one hand, we relax the definition of a toric order to cancellation algebras (Definition \ref{cancellation}) and on the other hand we introduce the notion of a weighted quiver polyhedron which corresponds roughly to a dimer model on a two-dimensional orientable orbifold (Examples can be found in section \ref{examples}). 
Our main theorem then states
\begin{theorem}[=Theorem \ref{qpol}]
Every positively graded cancellation algebra that is CY-3 comes from a weighted quiver polyhedron.
\end{theorem}
In the specific situation of toric orders, theorem \ref{qpolto} then shows that this quiver polyhedron
must in fact come from a dimer model.

The paper is structured as follows. After the preliminary sections on path algebras, Calabi-Yau algebras and noncommutative resolutions, we
introduce toric orders in section \ref{sectionto} and discuss how they fit in the theory of noncommutative resolutions.
In section \ref{sectionca} we generalize toric orders to cancellation algebras and discuss bimodule resolutions in this setup.
In section \ref{polyhedron} we define the combinatorial notion of a quiver polyhedron, relate it to dimer models and work out a theory
of Galois covers for them. In section \ref{maintheorem} we prove the main theorem. Section \ref{cancellation} contains a short discussion on the cancellation
property for quiver polyhedra and uses this to prove that toric CY-3 orders come from dimer models.
We end with some examples of quiver polyhedra and their corresponding Jacobi Algebras.

\section{Acknowledgements}
I wish to thank Jan Stienstra for the talk he gave in Antwerp \cite{stienstra}, 
which initiated my interest into the theory of dimer models. I also want to thank Nathan Broomhead, Alastair King, Michael Wemyss, Travis Schedler and Sergei Mozgovoy for the interesting discussions about these subjects at the Liegrits workshop in Oxford in January 2008 and during my stays in Bath, Chicago and Wuppertal.

\section{Preliminaries}\label{prelim}

\subsection{Path algebras with relations}
As usual a \emph{quiver} $Q$ is an oriented graph. We denote the set of vertices by $Q_0$, the set of arrows by $Q_1$ and the maps $h,t$ assign to each arrow its head and tail.
A \emph{nontrivial path} $p$ is a sequence of arrows $a_1\cdots a_k$ such that $t(a_i)=h(a_{i+1})$, whereas a \emph{trivial path} is just a vertex. We will denote the length of a path by $|p|:= k$ and the head and tail by $h(p)=h(a_1),~ t(p)=t(a_k)$. A path is called cyclic if $h(p)=t(p)$. 
Later on we will denote by $p[i]$ the $n-i^{th}$ arrow of $p$ and by $p[i\dots j]$ the subpath $p[i]\dots p[j]$.
\[
 \xymatrix{\vtx{}&\vtx{}\ar[l]_{p[n-1]}&\vtx{}\ar[l]_{p[n-2]}&\vtx{}\ar@{.>}[l]_{p[1]}&\vtx{}\ar[l]_{p[0]}}\text{ and }p = p[n-1]p[n-2]\dots p[1]p[0].
\]
A quiver is called \emph{connected} if it is not the disjoint union of two subquivers and it is \emph{strongly connected} if there is a cyclic path through each pair of vertices.

The \emph{path algebra} $\C Q$ is the complex vector space with as basis the paths in $Q$ and the multiplication of two paths $p$, $q$ is their concatenation $pq$ if $t(p)=h(q)$ or else $0$.
The span of all paths of nonzero length form an ideal which we denote by $\cJ$.
A \emph{path algebra with relations} $A=\C Q/\cI$ is the quotient of a path algebra by a finitely generated ideal $\cI \subset \cJ^2$. A path algebra is connected or strongly connected if and only if its underlying quiver is.

We will call a path algebra with relations $\C Q/\cI$ \emph{positively graded} if there exists a grading $\carr: Q_1 \to \Rl_{>0}$ such that $\cI$ is generated by homogeneous relations. Borrowing terminology from physics, we will sometimes call this map the $R$-charge.

A special type of path algebras with relations are Jacobi algebras. To define these we need to introduce some notation.
The vector space $\C Q/[\C Q,\C Q]$ has as basis the set of cyclic paths up to cyclic permutation of the arrows. We can embed this space into $\C Q$ by mapping a cyclic path onto the sum of all its possible cyclic permutations:
\[
\cycle : \C Q/[\C Q,\C Q] \to \C Q: a_1\cdots a_n \mapsto \sum_i a_i\cdots a_na_1\cdots a_{i-1}.
\] 
An element of the form $p +[\C Q,\C Q]$ where $p$ is a cyclic path will be called a \emph{cycle}. Usually we will drop the $+[\C Q,\C Q]$ from the notation and represent the cycle by one of its cyclic paths.

Another convention we will use is the deletion of arrows: if $p:= a_1\cdots a_n$ is a path and $b$ an arrow, then
$p \del b=a_1\cdots a_{n-1}$ if $b=a_n$ and zero otherwise. Similarly one can define $\del bp$. These new defined maps can be combined to obtain a 'derivation' 
\[
\partial_a : \C Q/[\C Q,\C Q] \to \C Q : p \mapsto \cycle(p)\del a =\del a \cycle(p).
\]
An element  $W \in \cJ^3/[\C Q,\C Q] \subset \C Q/[\C Q,\C Q]$ is called a \emph{superpotential}. This element does not need to be homogeneous. If we quotient out the partial derivatives of a superpotential we get an algebra which is called the \emph{Jacobi algebra}:
\[
 A_W := \C Q /\<\partial_a W: a \in Q_1\>.
\]
Note that if $W$ is homogeneous for some $R$-charge $\carr$, then the corresponding Jacobi Algebra is a positively graded algebra. The converse does not need to be true.

\subsection{Calabi-Yau Algebras}

\begin{definition}
A path algebra with relations $A$ is \emph{$n$-dimensional Calabi-Yau} (CY$-n$) 
if $A$ has a projective bimodule resolution $\cP^\bullet$ that is dual to its $n^{th}$ shift
\[
 \Hom_{A-A}(\cP^\bullet,A\otimes A)[n] \cong \cP^\bullet
\]
\end{definition}
For further details about this property we refer to \cite{bocklandt} and \cite{GB}.
In this paper we will only need the following results:

\begin{property}\label{CYprop}
If $A$ is CY-n then 
\begin{itemize}
\item[C1]
The global dimension of $A$ is $n$
\item[C2] 
If $X, Y \in \Mod A$ then
\[
\Ext^k_A(X,Y) \cong \Ext^{n-k}_A(Y,X)^*.
\]
\item[C3]
The identifications above give us a pairings $\<,\>^k_{XY}: \Ext^k_A(X,Y) \times \Ext^{n-k}_A(Y,X) \to \C$ which satisfy
\[
\<f,g\>^k_{XY} = \<1_X,g*f\>^0_{XX} = (-1)^{k(n-k)}\<1_Y,f*g\>^0_{YY}, 
\]
where $*$ denotes the standard composition of extensions.
\end{itemize}
\end{property}
Proofs can be found in \cite{bocklandt}.

\subsection{Noncommutative resolutions and orders}

Suppose $\VV$ is a normal variety with coordinate ring $R$ and function field $K$. A resolution of $\VV$ is a proper birational surjective map $\pi: \tilde \VV \to \VV$ such that $\tilde \VV$ is smooth.
The birationality of $\pi$ implies that it gives an isomorphism on the level of the function fields: $K(\tilde \VV)=K$.

A nice method to try to construct a resolution is by using orders. An \emph{$R$-order} in $\Mat_n(K)$ is an $R$-algebra $A\subset \Mat_n(K)$ that is a finitely generated $R$-module and
\[
A \cdot K =A \otimes_R K= \Mat_n(K).
\]
The embedding $R \subset A$ can be seen as a noncommutative generalization of the resolution because birationally (i.e. tensoring with $K$) it gives a Morita equivalence instead of an isomorphism.

Given an order $A$, we have a notion of a trace $\Tr: A \to R$, which is the restriction of the standard trace function in $\Mat_n(K)$. Traces of elements in $A$ sit in $R$ because $R$ is a normal domain.
This trace allows us to consider the $n$-dimensional trace preserving representations of $A$:
\[
 \Trep A := \{ \rho: A \to \Mat_n(\C)| \Tr \rho(a)=\rho(\Tr a)\}
\]
This object can be given the structure of an affine scheme (take care, it can consist of several components). It has an action of $\GL_n(\C)$ by conjugation and using this action we can reconstruct $A$ as the ring of equivariant maps and $R$ as the ring of invariant maps (see \cite{procesi}):
\se{
A &= \Eqv_{\GL_n} ( \Trep A, \Mat_n(\C)) := \{f: \Trep A \to \Mat_n(\C)|\forall g \in \GL_n: f(\rho^g) = f(\rho)^g\},\\
R &= \Inv_{\GL_n} ( \Trep A, \C) := \{f: \Trep A \to \C|\forall g \in \GL_n: f(\rho^g) = f(\rho)\}.
}
Geometrically this means that $R$ is the coordinate ring of the categorical quotient $\Trep A/\!\!/\GL_n$ and
this quotient parameterizes the \emph{isomorphism classes of semisimple trace preserving representations of $A$}.
In general the space $\Trep A$ consists of more than one component but there is only one component that maps surjectively 
onto the quotient. This is the component that contains the generic simples and we denote it by $\srep A$.

To construct a resolution of $\VV=\srep A/\!\!/\GL_n$, we can try to take a Mumford quotient instead of the categorical quotient. To do this, one must specify a stability condition, which in the case of path algebras with relations amounts to choosing a $\theta \in \Z^{Q_0}$ (see \cite{Kingstab}). The new quotient $\VV_\theta = \srep A/\!\!/_\theta\GL_n$ parameterizes the \emph{isomorphism classes of (direct sums of) $\theta$-stable trace preserving representations of $A$}. 
If one is lucky the new quotient is smooth and then it provides a resolution of $\VV$.

The idea of using orders for the construction of resolutions motivates the notion of a noncommutative resolution.
There are many possible definitions but they all share the following properties:
\begin{itemize}
\item $A$ is an $R$-order in $\Mat_n(K)$
\item $A$ has some smoothness property: finite homological dimension/homologically homogeneous/Calabi-Yau.
\end{itemize}
In this paper the focus is on the Calabi-Yau property, so for us noncommutative resolutions are Calabi-Yau orders.
Although at first sight this seems to be slightly different from the notion of the noncommutative crepant resolutions introduced by Van den Bergh
in \cite{nccrep}, in the 3-dimensional toric setting these notions will coincide (see \cite{bocklandtconsistency}).

\section{Toric orders}\label{sectionto}

If $\VV$ is a toric variety, then it has a faithful action of a torus $\TT^k = \C^{*k}$ with a dense open orbit.
Ring-theoretically this means that $R$ is $\Z^k$-graded and we can embed it as a graded subring of $T :=\C[\TT^k] = \C[X_1,X_1^{-1},\dots, X_k,X_k^{-1}]$.

To resolve the singularities of $\VV$, we want to keep the toric structure of $\VV$ so we need to construct a toric resolution. By this we mean that the map $\pi: \tilde \VV \to \VV$ is a $\C^{*k}$-equivariant map that is one to one on the torus $\TT^k$. From the point of view of rings, the coordinate ring of the torus now substitutes for the function field $K$ and everything gets $\Z^k$-graded.

This enables us to define \emph{toric orders}.
\begin{definition}
Let $R\subset T=\C[X_1,X_1^{-1},\dots, X_k,X_k^{-1}]$ be the coordinate ring of a toric variety. A toric $R$-order $A$ is a positively $\Z^k$-graded $R$-subalgebra of $\Mat_n(T)$ that is a finitely generated $R$-module and
\begin{itemize}
\item[TO1] $A \cdot T = \Mat_n(T)$
\item[TO2] $R^{\oplus n}\subset A$
\end{itemize}
\end{definition}
\begin{remark}
By positively $\Z^k$-graded, we mean there is a vector $u \in \Z^k$ such that if there are nonzero homogeneous elements with degree $v\in\Z^k \setminus\{0\}$, then $u\cdot v>0$.
\end{remark}
Toric orders are special orders, so we can also reconstruct $R$ and $A$ from the invariant and equivariant maps on $\Trep A$. If we do this we leave the toric context because $\GL_n$ is not toric. However, with a slight modification we
can make everything we said in the previous section work in the toric context.

We can get rid of $\GL_n$ by looking at $\alpha$-dimensional representations with $\alpha=(1,\dots,1)$. 
Because of condition TO2, the standard idempotents $e_i \subset \Mat_n(\C) \subset \Mat_n(T)$ must sit in $A$. We now define
\[
\trep_{\alpha} A := \{\rho \in \trep A| \rho(e_i)=e_i\}
\]
This is a closed subscheme of $\trep A$ that meets every orbit. The action of $\GL_n$ on $\trep A$ restricts to 
an action of ${\GL_{\alpha}} = \C^{*n} \subset \GL_n$ on $\trep_{\alpha} A$ and 
\[
 \trep A = \trep_{\alpha} A \times_{\GL_{\alpha}} \GL_n.
\]
Furthermore we have again that
\[
A = \Eqv_{\GL_{\alpha}} ( \Trep_{\alpha} A, \Mat_n(\C)) \text{ and }R = \Inv_{\GL_{\alpha}} ( \Trep_{\alpha} A, \C).
\]

Just as before we single out one component $\srep_{\alpha} A = \srep A \cap \trep_{\alpha} A$.
This component contains a $n-1+k$-dimensional torus coming from the pullback of the representations
of $\Mat_n(T)$ and there is a combined action of $\C^{*n-1}$ from $\GL_{\alpha}/\C^*$ and $\C^{*k}$ by scaling of the variables. Therefore $\srep_{\alpha} A$ can be seen as a toric variety but it is not necessarily normal. 

Unlike in the general case of orders, toric orders have the advantage that one only needs $\srep_{\alpha} A$ to
reconstruct the order and not the whole space $\trep_\alpha A$.

\begin{theorem}\label{srep}
If $A$ is a toric $R$-order in $\Mat_n(K)$ then
\[
A = \Eqv_{\GL_{\alpha}} ( \srep_{\alpha} A, \Mat_n(\C)) \text{ and }R = \Inv_{\GL_{\alpha}} ( \srep_{\alpha} A, \C).
\]
\end{theorem}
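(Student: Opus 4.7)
The plan is to reduce the statement to the general order formulas
\[
A = \Eqv_{\GL_\alpha}(\trep_\alpha A, \Mat_n(\C)), \qquad R = \Inv_{\GL_\alpha}(\trep_\alpha A, \C)
\]
recorded earlier in this section, by proving that restriction along the closed inclusion $\srep_\alpha A\hookrightarrow\trep_\alpha A$ is a bijection on both sides. Surjectivity is a standard invariant-theoretic observation: the inclusion of a closed subscheme induces a surjection $\C[\trep_\alpha A]\twoheadrightarrow\C[\srep_\alpha A]$, tensoring with the finite dimensional space $\Mat_n(\C)$ preserves this, and taking $\GL_\alpha$-invariants is exact since $\GL_\alpha$ is reductive in characteristic zero; both the equivariant and the invariant restrictions are therefore surjective.

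For injectivity I would exploit the $k$-dimensional family of evaluation representations living inside $\srep_\alpha A$. Each $t\in\TT^k$ gives an algebra map $\Mat_n(T)\to\Mat_n(\C)$ by substituting $X_i\mapsto t_i$; composing with $A\subset\Mat_n(T)$ produces a representation $\rho_t:A\to\Mat_n(\C)$. Condition TO2 guarantees that $\rho_t(e_i)=E_{ii}$, so $\rho_t$ has dimension vector $\alpha$ and is trace preserving, and condition TO1 guarantees that for generic $t$ the map $\rho_t$ is surjective onto $\Mat_n(\C)$ and hence simple. Consequently the connected family $\{\rho_t\}_{t\in\TT^k}$ sits on the distinguished component $\srep A$, hence inside $\srep_\alpha A$.

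Now if an element $a\in A$ corresponds to an equivariant map vanishing on $\srep_\alpha A$, then $a(t)=\rho_t(a)=0$ for every $t\in\TT^k$, so $a$ is a matrix of Laurent polynomials vanishing on all of $\TT^k$, forcing $a=0$. The invariant side is handled by the same argument, with $r\in R$ acting on $\rho_t$ as $r(t)\cdot\id$ and using $R\subset T$. Combining injectivity and surjectivity yields the two isomorphisms.

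The one point that requires real care is verifying that the evaluation representations actually lie on the distinguished component $\srep A$ and not on some other component of $\trep A$; this is the only step where the toric hypotheses (via TO1, TO2) and the characterization of $\srep A$ as the component of generic simples are used, and once it is settled the remaining arguments are formal.
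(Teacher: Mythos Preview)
Your proof is correct and follows essentially the same strategy as the paper: reduce to the known identity $A=\Eqv_{\GL_\alpha}(\trep_\alpha A,\Mat_n(\C))$ and show that restriction to $\srep_\alpha A$ is bijective, with surjectivity coming from the closed embedding and exactness of invariants under the reductive group $\GL_\alpha$. The only difference is in the packaging of the injectivity step: the paper argues algebraically that the restriction $\C[\trep_\alpha A]\to\C[\srep_\alpha A]$ becomes injective after $-\otimes_R T$ and then invokes $A\subset\Mat_n(T)$, whereas you phrase the same fact geometrically via the evaluation representations $\rho_t$ for $t\in\TT^k$. Your formulation is arguably cleaner, since it makes explicit why the torus fibre of $\trep_\alpha A$ lands in $\srep_\alpha A$ (the family is irreducible and generically simple), a point the paper leaves somewhat implicit in the phrase ``it becomes an injection if we tensor it over $R$ with the torus ring $T$''. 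Either way the two arguments are interchangeable and rest on the same toric input, namely TO1 for genericity/simplicity and the embedding $A\subset\Mat_n(T)$ for the final vanishing.
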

\begin{proof}
We have a map $\Eqv_{\GL_{\alpha}}( \trep_\alpha, \Mat_n(\C))  \to \Eqv_{\GL_{\alpha}}( \srep_\alpha A, \Mat_n(\C))$
by restriction. This map decomposes as a direct sum of maps according to the matrix entries
\[
\Eqv_{\GL_{\alpha}}( \trep_\alpha, \Mat_n(\C))_{ij}  \to \Eqv_{\GL_{\alpha}}( \srep_\alpha, \Mat_n(\C))_{ij}
\]
But $\Eqv_{\GL_{\alpha}}( \trep_\alpha A, \Mat_n(\C))_{ij}$ is the subspace $\C[\trep_{\alpha} A]_{ij} \subset \C[\trep_{\alpha} A]$ of weight $i-j$ for the $\GL_{\alpha}$-action. The same holds for $\Eqv_{\GL_{\alpha}}(\srep A, \Mat_n(\C))_{ij}$.

The map $\C[\trep_{\alpha} A] \to \C[\srep_{\alpha} A]$ is a surjection that is compatible with the $\C^{*n}$-action.
This means that $\C[\trep_{\alpha} A]_{ij} \to \C[\srep_{\alpha} A]_{ij}$ is surjective. 
$\C[\trep_{\alpha} A] \to \C[\srep_{\alpha} A]$ is not an injection but it becomes an injection if we tensor it over $R$
with the torus ring $T$ (note that $R$ sits both in $\C[\trep_{\alpha} A]$ and $\C[\srep_{\alpha} A]$ as a subring because $\srep_\alpha A$ is the component that maps surjectively to $\VV$). Therefore if $a \in \C[\trep_{\alpha} A]_{ij}$ sits in the kernel then we can lift $a$ to an element in $A$ such that $a \otimes_R 1_T =0$ but this is impossible because $A \subset \Mat_n(T)$.

The second statement follows directly from the first.
\end{proof}
\begin{remark}
This property corresponds to the notion of algebraic consistency, introduced by Broomhead. 
To be more specific $\Eqv_{\GL_{\alpha}} (\srep_{\alpha} A,\Mat_{n}(\C))$ can be identified with the algebra $B$ in \cite{broomhead}.
\end{remark}

\section{Category algebras and cancellation algebras}\label{sectionca}

In this section we will extend the notion of toric orders to a non-Noetherian setting. This generalization gives rise to the notion of a cancellation algebra.

\subsection{Motivation and definition}

From any toric algebra $A \subset \Mat_n(T)$ we can construct a category $\cC_A$. The objects
of this category are the elementary idempotents $e_i \subset A$ and the morphisms from $e_i$ to $e_j$
are the monomials of $T$ which occur on the $(i,j)$-th entries of elements in $A$. In other words:
\[
 \Hom_{\cC_A}(e_j,e_i) = \{\text{monomials in }e_iAe_j\}.
\]
We can reconstruct $A$ as the \emph{category algebra} of $\cC_A$. 
As a vector space, the basis of this algebra is
given by the set of all morphisms of $\cC_A$.
The multiplication is defined by the composition of morphisms if possible and zero otherwise. 

The category $\cC_A$ has a special property: it is a cancellation category.
\begin{definition}
A category $\cC$ is called a cancellation category if every morphism is epic and monic:
\[
 \forall a,b,c: ab=ac \implies b=c \text{ and }ac=bc \implies a=b \text{ (when defined).}
\]
The category algebra of a cancellation category is called a cancellation algebra.
\end{definition}
\begin{remark}
We will assume implicitly that a cancellation category only has a finite number of objects, 
and can be generated by a finite number of arrows. 
This is to make sure that the cancellation algebras we will consider are finitely generated as algebras. 
Later we will also investigate cancellation categories with a countable number of objects. 
In this case the category algebra will not be unital.
\end{remark}
\begin{remark}
A category algebra can equivalently be defined as a path algebra of a quiver with relations $\C Q/{\cI}$ where $\cI$ is generated by elements of the form $p-q$ with $p,q$ paths. Therefore it makes sense to use the notation $h(p)$ and $t(p)$ for morphisms in the category. In general it is not easy to check from a set of relations of the required form whether the corresponding category algebra is a cancellation algebra or not.

Just as for quivers we can speak of positively graded categories, category algebras and in particular positively graded toric orders. In this last case all monomials in a toric order are homogeneous for the grading $\carr$.
\end{remark}

The simplest examples of cancellation categories are groupoids. In these categories the cancellation law holds trivially because every morphism is invertible. Subcategories of groupoids are also cancellation categories, however
unlike in the case of abelian groups and semigroups it is not true that every cancellation category embeds in a groupoid.

If we return to the section of toric orders, it is easy to check that $\Mat_n(T)$ is the category algebra of the groupoid with 
$n$ objects $\{e_i\}_{i=1}^n$ such that $\Hom(e_i, e_j) = \ZZ^k$ for any $i$ and $j$. 
As a consequence, if $A$ is a toric order, then $\cC_A$ is a cancellation category because it is a subcategory of that groupoid.

\begin{lemma}
Every toric order is a cancellation algebra.
\end{lemma}

\subsection{Bimodule resolutions}\label{GPR}

Let $A$ be a category algebra with corresponding category $\cC$.
A (bi)-module $M$ of $A$ is called $\cC$-graded if $M = \bigoplus_{p \in \Mor\cC} M_p$ such that $M_p \subset h(p)Mt(p)$ and $\forall q: qM_p \subset M_{qp}$ and $M_pq \subset M_{pq}$, where we used the convention that $M_?=0$ to cover the case where $pq$ or $qp$ is not defined. A homogeneous map is a morphism $\phi :M \to N$ such that $\phi M_p \subset N_p$ and the kernel and image of a homogeneous map are clearly $\cC$-graded. For every $p$ in $\Mor \cC$ we define the projective bimodule $F_p= Ah(p) \otimes p \otimes t(p)A$ with the obvious grading $q_1\otimes p \otimes q_2 \in (F_p)_{q_1pq_2}$ and analogously the projective left module $P_p= Ah(p)$ with grading $q \in (P_p)_{qp}$.

If $A$ is positively graded then the category of $\cC$-graded bimodules with bihomogeneous morphisms is a perfect category in the sense of Eilenberg \cite{Eilenberg} and hence we can construct a minimal projective bimodule resolution of $A$ as a bimodule over itself
with the obvious grading. The first terms of this map are
\[
\xymatrix{
\underset{s \in \cS}\bigoplus F_{s}\ar[r]^{\delta_3}&
\underset{r \in \cR}\bigoplus F_{r}\ar[r]^{\delta_2}&
\underset{b \in Q_1}\bigoplus F_{b}\ar[r]^{\delta_1}&
\underset{i \in Q_0}\bigoplus F_{i}\ar[r]^m&A
}
\]
where
\se{
m(q_1 \otimes i \otimes q_2) &= q_1q_2\\
\delta_1(q_1 \otimes b \otimes q_2) &= q_1b\otimes t(b) \otimes q_2 - q_1\otimes h(b)  \otimes bq_2\\ 
\delta_2(q_1 \otimes r \otimes q_2) &= \sum_k q_1a_1\dots \otimes a_{k} \otimes \dots a_n q_2 - 
\sum_k q_1b_1\dots \otimes b_{k} \otimes \dots b_m q_2\\
\delta_3(q_1 \otimes s \otimes q_2) &= q_1sq_2.
}
\
and $r=a_1\dots a_n - b_1\dots b_m$, $F_r := F_{a_1\dots a_n}=F_{b_1\dots b_m}$ and $\cS$ is a minimal set of homogeneous
generators of $\Ker \delta_2$. This set might be infinite. 

For every vertex $i \in Q_0$ we can tensor this resolution over $A$ on the right with the one-dimensional left module $S_i=Ai/\cJ i$ concentrated in $\cC$-degree $i$. This gives us the minimal graded left module resolution of $S_i$.

\section{Quiver polyhedra}\label{polyhedron}

\subsection{Definition}
The last ingredient we need is quiver polyhedra. 
\begin{definition}
A \emph{quiver polyhedron} $\qpol$ is a strongly connected quiver $Q$ enriched with 2 disjoint sets of cycles $Q^+_2$ and $Q_2^-$ such that
\begin{itemize}
\item[PO] {\bf Orientability condition.} Every arrow is contained exactly once in one cycle in $Q_2^+$ and once in one in $Q_2^-$. 
\item[PM] {\bf Manifold condition.} The incidence graph of the cycles and arrows meeting a given vertex is connected.
\end{itemize}
A \emph{weighted quiver polyhedron} is a pair consisting of a quiver polyhedron and
a map $\ccyc: Q_2= Q_2^+\cup Q_2^- \to \N_{>0}$
such that $\forall c \in Q_2:\ccyc_c|c|>2$.
By abuse of notation, we will use the symbol $\qpol$ to denote a weighted quiver polyhedron.
If $\ccyc$ is the constant map to $1$ we say $\qpol$ is an unweighted quiver polyhedron.

A weighted quiver polyhedron is \emph{positively graded} if there is an $\carr$-charge $\carr: Q_1 \to \R_{>0}$
such that the expression $\ccyc_c\carr_c$ is the same for all cycles in $Q_2$.  
\end{definition}
\begin{remark}
For a number of examples of quiver polyhedra we refer to section \ref{examples}.
There we also discuss the connection with dimer models.
\end{remark}
\begin{remark} 
Not every weighted quiver polyhedron can be given a grading. In \cite{broomhead}[Remark 2.3.5] a combinatorial condition
is given for this to be true in the case of unweighted quiver polyhedra. At the end of this section we will
state this condition for weighted quiver polyhedra.

The grading that one can assign to a weighted quiver polyhedron is also far from unique, however 
the algebraic properties that we will discuss further on do not depend on it. They depend merely on 
the existence of a grading.
\end{remark} 

From a quiver polyhedron we can build a topological space $X$ by associating to every cycle of length $k$ a $k$-gon. We label the edges of this $k$-gon cyclicly by the arrows of the quiver and identify edges of different polygon labeled with the same arrow. 
\begin{lemma}\label{comsurf}
If $\qpol$ is a quiver polyhedron then $X$ is a compact orientable surface.
\end{lemma}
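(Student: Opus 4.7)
The plan is to verify the three defining properties of a compact orientable surface in turn: compactness, the 2-manifold property, and orientability.

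Compactness is immediate since $X$ is the quotient of a finite disjoint union of Euclidean polygons by the edge identifications, and there are only finitely many cycles in $Q_2$ because $Q$ is finite. For the 2-manifold property, interior points of the polygons have obvious Euclidean neighborhoods. An interior point of an edge coming from an arrow $a$ also has a disk neighborhood: by the orientability condition PO, exactly two polygons meet along this edge, namely the unique $c^+\in Q_2^+$ and the unique $c^-\in Q_2^-$ containing $a$, and the two half-disks glue to a full disk.

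The delicate case is a vertex $v\in Q_0$. I will analyze the link $L_v$ of $v$ in $X$ as a graph. Its vertices are the half-edges at $v$ (one for each arrow incident to $v$, or two if $a$ is a loop), and its edges are the corners of polygons at $v$, each corner joining the two half-edges that bound it. By PO, every arrow at $v$ lies in exactly one cycle in $Q_2^+$ and one in $Q_2^-$, which produces exactly two corners at $v$ containing that arrow. Hence $L_v$ is a 2-regular graph. The manifold condition PM then says precisely that $L_v$ is connected: both PM and link-connectedness encode the same incidence data of cycles and arrows at $v$. A connected 2-regular graph is a single cycle, so $L_v\cong S^1$, and a neighborhood of $v$ in $X$ is the cone on this circle, i.e.\ a disk. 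I expect this step — stating the link graph cleanly and matching PM to its connectedness — to be the main technical obstacle, especially because a cycle may pass through $v$ several times and arrows may be loops.

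Finally, for orientability I orient each polygon labeled by $c\in Q_2^+$ according to the cyclic order of its arrows, and each polygon labeled by $c\in Q_2^-$ according to the opposite of its cyclic order. For an edge $a$ shared by $c^+\in Q_2^+$ and $c^-\in Q_2^-$, both cyclic orders traverse $a$ in the quiver direction $t(a)\to h(a)$; with the chosen orientations this induces orientation $t(a)\to h(a)$ from the $c^+$-side and $h(a)\to t(a)$ from the $c^-$-side. The two sides induce opposite orientations on every shared edge, which is exactly the compatibility condition for a coherent orientation of the whole surface. Hence $X$ is orientable.
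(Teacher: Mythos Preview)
Your proof is correct and follows essentially the same approach as the paper's: both check the disk-neighborhood property at interior, edge, and vertex points (using PO for edges and PM for vertices) and then orient via the $Q_2^+/Q_2^-$ dichotomy. Your version is more explicit---you spell out compactness, formalize the vertex case via the link graph $L_v$, and verify the edge-orientation compatibility carefully---but the underlying argument is identical to the paper's.
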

\begin{proof}
We need to show that every point in $X$ has a neighborhood that is homeomorphic to an open disk.
For the internal points of the polygons this is trivially true. If $p$ lies on an edge of
a polygon but not on a corner then this is true because by condition PO a small enough neighborhood of $p$ will consist of
two half disks glued together. 
If $p$ is a corner of a polygon, a neighborhood of $p$ consists of triangles glued together over common edges. The result in general will be a set of disks glued together at $p$ and there is just one disk if and only if PM holds.

Using the condition PO, this surface
can be oriented by assigning an anticlockwise direction to the cycles in $Q_2^+$ and a clockwise direction for those 
in $Q_2^-$.
\end{proof}
Conversely, if $Q$ is a strongly connected quiver drawn on an orientable surface such that the complement of the quiver consists of simply connected pieces bounded by cycles, then we can give $Q$ the structure of a quiver polyhedron by
taking as $Q^+_2$ the cycles that bound pieces anticlockwise and as $Q^-_2$ the cycles that bound pieces clockwise.
It can easily be checked that PO and PM hold.

If $\qpol$ is a weighted quiver polyhedron, it is interesting to give the topological space $X$ the structure of an orbifold. 
We can do this by substituting the $k$-gon corresponding to a cycle $c$ with the orbifold
obtained by quotienting an $kr$-gon by the rotation group of order $r=\ccyc_c$. If we do this for all cycles we get an orbifold
that contains an orbifold singularity of order $\ccyc_c$ for every cycle $c$. We will denote this orbifold by $|\qpol|$.
It is clear from this construction that the orbifold $|\qpol|$ of a trivially weighted quiver is just the compact surface $X$.

For any weighted quiver polyhedron it makes sense to define its Euler characteristic as the Euler characteristictic of
its orbifold $|\qpol|$.
\[
\chi_\qpol := \# Q_0 - \# Q_1 + \sum_{c \in Q_2} \frac 1{\ccyc_c}.
\]

For a weighted quiver polyhedron $\qpol$, we can define a superpotential
\[
W = W^+-W^- :=\sum_{c \in Q_2^+}\frac {c^{\ccyc_c}}{\ccyc_c}  - \sum_{c \in Q_2^-}\frac {c^{\ccyc_c}}{\ccyc_c}+[\C Q,\C Q].
\]
Here $c^{\ccyc_c}$ stands for a cycle obtained by running through $c$ $\ccyc_c$ times.
This superpotential gives rise to a Jacobi algebra $A_{\qpol} := A_W$. Note that $W \in \cJ^3$ because for every cycle $\ccyc_c|c|>2$.
\begin{lemma}
For any (positively graded) weighted quiver polyhedron $\qpol$ the Jacobi algebra $A_{\qpol}$ is a (positively graded) category algebra.
\end{lemma}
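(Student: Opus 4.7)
The plan is to reduce to the second remark of Section 4, which says that a category algebra is (equivalently) a path algebra $\C Q/\cI$ whose ideal $\cI$ is generated by elements of the form $p-q$ with $p,q$ paths. So everything comes down to computing $\partial_a W$ for each $a\in Q_1$ and checking that it is a difference of two parallel paths. The graded case will then follow for free by observing that the superpotential is $\carr$-homogeneous under the grading condition.

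First I would fix an arrow $a\in Q_1$. By the orientability condition PO, $a$ belongs to exactly one cycle $c^+\in Q_2^+$ and exactly one cycle $c^-\in Q_2^-$, so these are the only summands of $W$ that contribute to $\partial_a W$. Next, for any cycle $c\in Q_2$ containing $a$ exactly once, write $c=uav$ with $u,v$ (possibly trivial) paths satisfying $t(u)=h(a)$, $t(a)=h(v)$. The cyclic rotation $\cycle(c^r)$ produces $rk$ shifts of $c^r$, of which exactly $r$ end with the arrow $a$ (one for each occurrence of $a$ in $c^r$); by the periodicity of $c^r$ all $r$ of these shifts become the same path $v c^{r-1}u$ after deleting the trailing $a$. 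Hence
\[
\partial_a\!\left(\tfrac{c^{\ccyc_c}}{\ccyc_c}\right)=v\,c^{\ccyc_c-1}\,u,
\]
a single path with head $t(a)$ and tail $h(a)$. The normalizing factor $1/\ccyc_c$ in the definition of $W$ is precisely what kills the overcount $r=\ccyc_c$.

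Applying this to both $c^+=u^+av^+$ and $c^-=u^-av^-$ and taking into account the sign convention for $Q_2^\pm$,
\[
\partial_a W \;=\; v^+(c^+)^{\ccyc_{c^+}-1}u^+ \;-\; v^-(c^-)^{\ccyc_{c^-}-1}u^-,
\]
which is a difference of two paths from $t(a)$ to $h(a)$. Since this holds for every $a\in Q_1$, the Jacobi ideal $\langle\partial_a W:a\in Q_1\rangle$ is generated by elements of the form $p-q$, and the quoted remark gives the category algebra structure.

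For the graded refinement, assume $\qpol$ is equipped with an $\carr$-charge $\carr:Q_1\to\R_{>0}$ such that the value $d:=\ccyc_c\carr_c$ is the same for every $c\in Q_2$, where $\carr_c=\sum_{a\in c}\carr_a$. Then each summand $c^{\ccyc_c}/\ccyc_c$ of $W$ has $\carr$-weight $d$, so $W$ is $\carr$-homogeneous, and the two paths in the displayed formula for $\partial_a W$ both have $\carr$-weight $d-\carr_a$. Thus the Jacobi ideal is generated by homogeneous differences of paths, and $A_\qpol$ is a graded category algebra. The only subtle point is bookkeeping in the cyclic rotation $\cycle(c^{\ccyc_c})$—in particular, verifying that all $\ccyc_c$ rotations ending with $a$ produce the \emph{same} path in $\C Q$ rather than merely cyclic equivalents—but this is immediate from the $k$-periodicity of $c^r$, and everything else is a direct computation.
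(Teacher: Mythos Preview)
Your proof is correct and follows essentially the same route as the paper's: use condition PO to see that each arrow lies in exactly one positive and one negative cycle, compute that $\partial_a$ applied to $c^{\ccyc_c}/\ccyc_c$ yields a single path, and conclude that each relation has the form $p-q$; the graded case follows since $W$ is $\carr$-homogeneous. Your version is in fact more careful than the paper's one-line sketch, correctly tracking how the normalizing factor $1/\ccyc_c$ cancels the multiplicity $\ccyc_c$ coming from the cyclic derivative (the paper's phrase ``$\ccyc_c$ times the sum of two paths'' overlooks this normalization, though its conclusion is unaffected).
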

\begin{proof}
For any arrow $a$ the partial derivative $\partial_a W$ is $\ccyc_c$ times the sum of two paths with opposite signs and therefore every relation is of the form $p-q$. 

If $\qpol$ is positively graded then the superpotential is homogeneous so the Jacobi algebra is positively graded.
\end{proof}
\begin{remark}
It is important to note that $A_{\qpol}$ is a category algebra but not always a cancellation algebra.
We will come back to this issue in section \ref{cancellation}.
\end{remark}

\subsection{Galois covers}
A morphism between weighted quiver polyhedra $\qpol^A$ and $\qpol^B$ is a pair of maps $\phi:Q_0^A \to Q_0^B$ and
$\phi:Q_1^A \to Q_1^B$ respecting head and tails ($\phi(h(a))=h(\phi(a))$ and $\phi(t(a))=t(\phi(a))$)
such that if $c \in Q_2^{A+}$ ($Q_2^{A-}$) we can find a $d \in Q_2^{B+}$ ($Q_2^{B-}$) such that $\phi(c^{\ccyc_c}) = d^{\ccyc_d}$. One can check easily from the definition that a morphism between quiver polyhedra gives corresponding morphisms between their orbifolds and their path algebras.

Let $\grp G$ be a group of automorphisms of a weighted quiver polyhedron $\qpol$ such that no nontrivial element $g \in \grp G$ fixes a vertex of $\qpol$. The quotient quiver $Q/\grp G$ is defined as the quiver with as vertices and arrows the orbit classes of vertices and arrows in $\qpol$. There is a projection map $\pi:Q \to Q/\grp G$ that maps
each vertex and arrow to its orbit. Under $\pi$ every cycle $c \in Q_2$ is mapped to a cycle in $\qpol/\grp G$.
This cycle can sometimes be the power of a smaller primitive cycle: $\pi(c)=d^k$ for some $k$. The unique way to equip 
$Q/\grp G$ with a polyhedral structure is
\[
 (Q/\grp G)_2^\pm := \{d|\exists c \in Q_2^\pm: d^k=\pi(c) \text{ and $d$ is primitive}\},
\]
The weighting has the following form
\[
\ccyc_d := k \ccyc_c.
\]

The following theorem is straightforward:
\begin{theorem}
\begin{itemize}
\item[]
 \item 
If $\grp G$ is a group of automorphisms of a weighted quiver polyhedron $\qpol$ such that no nontrivial element $g \in \grp G$ fixes a vertex of $\qpol$ then the quotient morphism $\pi:\qpol \to \qpol/\grp G$ induces a cover morphism between the
two orbifolds $\tilde\pi:|\qpol| \to |\qpol/\grp G|$ and the group of cover automorphisms of $\tilde \pi$ is $\grp G$.
\item
On the level of path algebras we have a surjective map $\pi: \C\qpol \to \C\qpol/\grp G$ such that
if $q$ is a path in $\qpol/\grp G$ then for every vertex $v \in \pi^{-1}(h(q))$ there is a unique lifted path
$\lift_v q \in \pi^{-1}(q)$ such that $h(\lift_v q)=v$.
\item
Two paths in $q_1,q_2 \in \C \qpol/\grp G$ are equivalent in $A_{\qpol/\grp G}$ if and only if
there is a $v \in \pi^{-1}h(q_1)$ such that $\lift_v q_1$ is equivalent with $\lift_v q_2$ in $A_{\qpol}$.
\end{itemize}
\end{theorem}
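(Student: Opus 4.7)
The plan is to dispatch the three assertions in turn, using Lemma \ref{comsurf} for the orbifold statement, a direct induction for the algebraic lifting, and a chain-of-relations argument for the Jacobi-equivalence statement.

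For the first assertion I would extend the action of $\grp G$ on $\qpol$ to a cellular action on the space $X$ of Lemma \ref{comsurf} by letting $g\in\grp G$ send the polygon attached to a cycle $c$ to the polygon attached to $gc$ via the labelling identification on boundary edges. Because $\grp G$ is free on $Q_0$ and morphisms respect $h,t$, no nontrivial $g$ can fix an edge, so any point-stabilizer lies in the interior of a polygon and consists of rotations about its barycenter. Precisely, if $\pi(c)=d^k$ then the stabilizer of the barycenter of the $c$-face is cyclic of order $k$, acting by rotation through $|d|$ edges. Upgrading to the orbifold $|\qpol|$ by inserting a $\ccyc_c$-fold cone point at each such barycenter, the effective isotropy at the image barycenter in $|\qpol|/\grp G$ has order $k\ccyc_c$, which is precisely $\ccyc_d$ by the definition of the quotient weighting; hence $|\qpol|/\grp G=|\qpol/\grp G|$ as orbifolds and the deck group is visibly $\grp G$.

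For the second assertion, surjectivity of $\pi$ is immediate from surjectivity on vertices and arrows. The unique lifting is an induction on the length of $q=a_1\cdots a_n$: given $v\in\pi^{-1}(h(q))$, freeness of $\grp G$ on $Q_0$ forces exactly one arrow of $\pi^{-1}(a_1)$ to have head $v$, call it $\tilde a_1$, and one applies induction with starting vertex $t(\tilde a_1)$.

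For the third assertion, recall that $p\sim p'$ in $A_\qpol$ (resp.\ $A_{\qpol/\grp G}$) iff the two paths are linked by a chain of replacements of a subpath $c^{+,\ccyc_{c^+}}/a$ by $c^{-,\ccyc_{c^-}}/a$, where $c^\pm$ is the unique $\pm$-cycle through $a$ and the identity comes from $\partial_aW=0$. The easy direction is that $\pi$ carries such relations downstairs: if $\tilde c^\pm$ is the unique $\pm$-cycle through $\tilde a$ in $\qpol$ and $\pi(\tilde c^\pm)=(c^\pm)^{k^\pm}$, then the weighting compatibility $\ccyc_{c^\pm}=k^\pm\ccyc_{\tilde c^\pm}$ turns the Jacobi relation at $\tilde a$ into the Jacobi relation at $a$, so equivalent lifts project to equivalent paths. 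The hard direction is the core of the proof: fix $v\in\pi^{-1}(h(q_1))$ and a chain $q_1=p_0\to\cdots\to p_N=q_2$, lift $p_0$ to $\lift_vp_0$, and lift each replacement step in turn. The substep to check is that the lift, starting at the appropriate interior vertex of $\lift_vp_i$, of the subpath $c^{\pm,\ccyc_{c^\pm}}/a$ is precisely $\tilde c^{\pm,\ccyc_{\tilde c^\pm}}/\tilde a$ for the unique $\tilde c^\pm,\tilde a$ compatible with that vertex; both paths have common length $|c^\pm|\ccyc_{c^\pm}-1=|\tilde c^\pm|\ccyc_{\tilde c^\pm}-1$, and uniqueness of lifts from part two identifies them. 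This matching is the main obstacle and hinges entirely on the compatibility $\ccyc_d=k\ccyc_c$ baked into the quotient weighting; once granted, each replacement in the quotient lifts to a replacement in the cover and one obtains $\lift_vq_1\sim\lift_vq_2$ in $A_\qpol$, completing the iff.
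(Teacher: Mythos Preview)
Your proposal is correct and follows essentially the same approach as the paper's proof, which is itself very terse: the paper simply remarks that $|\qpol/\grp G|$ equals the quotient orbifold $|\qpol|/\grp G$ by construction, obtains the lift by applying the unique $g\in\grp G$ carrying $h(p)$ to $v$ for any chosen preimage $p$, and reduces the third part to the identities $\pi(\partial_a W_{\qpol})=\partial_{\pi(a)}W_{\qpol/\grp G}$ and $\pi^{-1}(\partial_b W_{\qpol/\grp G})=\{\partial_a W_{\qpol}:\pi(a)=b\}$. Your argument fleshes out exactly these points---the stabilizer analysis for the orbifold identification, an inductive rather than group-translation construction of the lift, and an explicit chain-lifting for the relations---so there is no substantive divergence.
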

\begin{proof}
The first statement follows because by construction $|\qpol/G|$ is the same orbifold as the quotient orbifold $|\qpol|/\grp G$.

Suppose $q$ is a path in $\qpol/\grp G$ and choose any lift $p \in \pi^{-1}(q)$. There is a unique $g \in \grp G$ that
maps $h(p)$ to $v \in \pi^{-1}(h(q))$ and therefore the lift of $q$ starting in $v$ is $g\cdot p$.

The last statement follows from the easy to check facts that $\pi (\partial_a W_{\qpol})= \partial_{\pi{a}}W_{\qpol/\grp G}$ and $\pi^{-1} \partial_b W_{\qpol/\grp G} = \{\partial_a W_{\qpol}| \pi(a)=b\}$.
\end{proof}

This theorem states in fact that $A_{\qpol}$ is a Galois cover of $A_{\qpol/\grp G}$ in the sense of \cite{galoiscover}.
It implies a close relationship between the two Jacobi algebras and many nice properties will either hold
in both or in none. An interesting example of such a property is the cancellation property.

\begin{theorem}\label{cancelcover}
The Jacobi algebra $A_{\qpol}$ is a cancellation algebra if and only if $A_{\qpol/\grp G}$ is a cancellation algebra.
\end{theorem}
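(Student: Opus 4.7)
The plan is to exploit the three properties of the Galois cover established in the preceding theorem: surjectivity of $\pi:\C\qpol\to\C\qpol/\grp G$, existence and uniqueness of lifts $\lift_v q$ for any path $q$ downstairs and any vertex $v\in\pi^{-1}h(q)$, and the equivalence criterion stating that $q_1\equiv q_2$ in $A_{\qpol/\grp G}$ iff some (hence, by the transitive $\grp G$-action on fibres, every) pair of lifts $\lift_v q_1$, $\lift_v q_2$ is equivalent in $A_\qpol$. Throughout I use that $\grp G$ acts by algebra automorphisms on $A_\qpol$ that commute with $\pi$, and freely and transitively on each fibre $\pi^{-1}(v)$.

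For the forward direction, I would assume $A_\qpol$ is a cancellation algebra and take two composable paths $a,b,c$ in $\qpol/\grp G$ with $ab\equiv ac$ in $A_{\qpol/\grp G}$ (so $h(b)=h(c)=t(a)$). Pick $v\in\pi^{-1}h(a)$ and set $w=t(\lift_v a)$. Then $\lift_v(ab)=\lift_v a\cdot\lift_w b$ and $\lift_v(ac)=\lift_v a\cdot\lift_w c$ by uniqueness of lifts, and by the equivalence criterion these two products are equivalent in $A_\qpol$. Left cancellation upstairs yields $\lift_w b\equiv\lift_w c$, and applying the criterion in the other direction gives $b\equiv c$ downstairs. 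Right cancellation is entirely symmetric.

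For the reverse direction, assume $A_{\qpol/\grp G}$ is a cancellation algebra and suppose $\tilde a\tilde b\equiv\tilde a\tilde c$ in $A_\qpol$ (so $h(\tilde b)=h(\tilde c)=t(\tilde a)=:v$). Applying the surjective homomorphism $\pi$ gives $\pi(\tilde a)\pi(\tilde b)\equiv\pi(\tilde a)\pi(\tilde c)$ in $A_{\qpol/\grp G}$, and cancellation downstairs produces $\pi(\tilde b)\equiv\pi(\tilde c)$. By the equivalence criterion applied with the chosen lift-vertex $v$, the lifts $\lift_v\pi(\tilde b)$ and $\lift_v\pi(\tilde c)$ agree in $A_\qpol$; but by uniqueness of lifts these are $\tilde b$ and $\tilde c$ themselves, so $\tilde b\equiv\tilde c$. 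Right cancellation is again symmetric.

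The only subtle point is the interplay between the existential quantifier in the equivalence criterion and the need to match initial vertices across both sides of a cancellation equation. This is handled by the fact that $\grp G$ acts on $A_\qpol$ by algebra automorphisms and transitively on each fibre: if $\lift_{v'}q_1\equiv\lift_{v'}q_2$ holds for one $v'$ then it holds for every $v\in\pi^{-1}h(q_1)$ after transport by the unique group element sending $v'$ to $v$. Once this observation is in place, both directions are routine syntactic manipulations, and no further structural input from the superpotential or the grading is required.
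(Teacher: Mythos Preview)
Your proof is correct and follows essentially the same approach as the paper: both arguments transfer cancellation (or its failure) between $A_\qpol$ and $A_{\qpol/\grp G}$ via the lifting property and the equivalence criterion from the preceding theorem. The only cosmetic differences are that the paper argues by contrapositive (showing a failure of cancellation upstairs projects to one downstairs, and a failure downstairs lifts to one upstairs) and restricts to cancellation by a single arrow, whereas you argue directly and cancel by an arbitrary path; neither distinction is substantive.
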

\begin{proof}
Let $p$ and $q$ be distinct paths and $a$ an arrow in $\qpol$ such that $pa=qa$.
In the quotient we have that $\pi(p)\pi(a)=\pi(q)\pi(a)$ and $\pi(p)\ne\pi(q)$ because these paths cannot be in the same orbit as they start in the same vertices and $\grp G$ acts freely on the vertices.

Suppose on the other hand that $r$ and $s$ are distinct paths and $b$ is an arrow in $\qpol/\grp G$ with $rb=sb$. Fix a vertex 
$v \in \pi^{-1}(h(r))$. By the lifting property $\lift_{v}(rb)=\lift_{v}(sb)$ and both must end in the same arrow
$a \in \pi^{-1}(b)$, so $\lift_{v}(r)a=\lift_{v}(s)a$ but  $\lift_{v}(r)\ne\lift_{v}(s)$ because their projections
to $A_{\qpol/\grp G}$ are different.
\end{proof}

The existence of a grading is compatible with the notion of Galois covers.
\begin{lemma}\label{covergrad}
$\qpol$ admits a grading if and only if $\qpol/\grp G$ does.
\end{lemma}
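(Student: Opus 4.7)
The plan is to handle the two implications of Lemma \ref{covergrad} separately, using the Galois covering map $\pi\colon Q \to Q/\grp G$ introduced just before the statement.

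For the easy direction $(\Leftarrow)$, I would simply pull back a grading along $\pi$. Given $\carr\colon (Q/\grp G)_1 \to \R_{>0}$ with $\ccyc_d\carr_d = \lambda$ for every $d \in (Q/\grp G)_2$, set $\carr'_a := \carr_{\pi(a)}$ for each $a \in Q_1$. For a cycle $c \in Q_2$ we have $\pi(c) = d^k$ with $\ccyc_d = k\ccyc_c$ by construction of $(Q/\grp G)_2$, and the arrows of $c$ project bijectively onto those of $d^k$, so $\carr'_c = k\carr_d$ and therefore $\ccyc_c\carr'_c = \ccyc_d\carr_d = \lambda$. Hence $\carr'$ grades $\qpol$.

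For the harder direction $(\Rightarrow)$, the key device is averaging. Given a grading $\carr$ on $\qpol$, observe that $\grp G$ acts on the cone of $R$-charges by $(g\cdot\carr)_a := \carr_{g^{-1}a}$, and each $g\cdot\carr$ is again a grading because $g$ permutes $Q_2^+\sqcup Q_2^-$ while preserving $\ccyc$ and cycle lengths. Since $\qpol$ is a finite quiver polyhedron and $\grp G$ acts freely on the finite set $Q_0$, the group $\grp G$ is itself finite, so the average $\bar\carr := \frac{1}{|\grp G|}\sum_{g \in \grp G} g\cdot\carr$ is well-defined, still strictly positive, still a grading (the conditions $\ccyc_{c_1}\carr_{c_1}=\ccyc_{c_2}\carr_{c_2}$ are linear and $\grp G$-invariant), and now $\grp G$-invariant. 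Because $\grp G$ acts freely on arrows as well, $\bar\carr$ descends to a well-defined $\bar{\bar\carr}\colon (Q/\grp G)_1 \to \R_{>0}$ via $\bar{\bar\carr}_b := \bar\carr_a$ for any lift $a \in \pi^{-1}(b)$, and the cycle-sum identity $\ccyc_c\bar\carr_c = \ccyc_d\bar{\bar\carr}_d$ from the first direction, combined with the surjectivity of $\pi$ on $Q_2$ (every primitive $d \in (Q/\grp G)_2$ arises from some $c \in Q_2$), yields $\ccyc_d\bar{\bar\carr}_d = \lambda$ for all $d$.

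The main obstacle is that an arbitrary grading on $\qpol$ need not be $\grp G$-invariant and therefore cannot be transported down naively; the averaging trick circumvents this, but only because $\grp G$ is forced to be finite by the (implicit) finiteness of the quiver polyhedron, which is in any case required for the orbifold Euler characteristic $\chi_\qpol$ to make sense as a finite sum.
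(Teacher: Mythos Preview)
Your proposal is correct and follows essentially the same approach as the paper: pull back the grading via $\pi$ for one direction, and average over $\grp G$ for the other. The paper's proof is extremely terse, giving only the two formulas $\carr_{\pi(a)} := \frac{1}{|\grp G|}\sum_{b\in \grp Ga}\carr_b$ and $\carr_a := \carr_{\pi(a)}$ without verification; your version supplies the details the paper omits, in particular the observation that $\grp G$ must be finite (since it acts freely on the finite vertex set) and the explicit check that $\ccyc_c\carr'_c = \ccyc_d\carr_d$ via the relation $\ccyc_d = k\ccyc_c$.
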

\begin{proof}
If $\qpol$ is positively graded, we can also give the new polyhedron a grading: 
\[
\carr_{\pi(a)} := \frac 1{|\grp G|}\sum_{b\in \grp Ga} \carr_b. 
\]
If $\qpol/\grp G$ is graded we transfer the grading as follows:
\[
\carr_{a} := \carr_{\pi(a)}. 
\]
\end{proof}

The technique of Galois covers can be used to simplify the structure of the polyhedron, without
changing the important properties of the cancellation algebra.

\begin{theorem}
A weighted quiver polyhedron can be covered by a quiver polyhedron with trivial weighting if and only if
it is not of the following forms:
\begin{itemize}
\item It has the topology of a sphere and 1 face with non-trivial weight.
\item It has the topology of a sphere and 2 faces with different non-trivial weights.
\end{itemize}
\end{theorem}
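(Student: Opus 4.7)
The plan is to recognize this as the classification of ``good'' versus ``bad'' closed two-dimensional orbifolds. The passage from $\qpol$ to $|\qpol|$ already takes us into the orbifold category: trivially weighted quiver polyhedra correspond exactly to compact surfaces (no cone singularities), and by the Galois cover construction established above, a cover $\pi:\tilde\qpol \to \qpol$ of weighted quiver polyhedra induces a regular orbifold cover of the associated orbifolds. Conversely, any regular orbifold cover can be lifted to the polyhedral level by pulling back the cell decomposition. So the theorem reduces to Thurston's classical dichotomy: a closed orientable 2-orbifold admits a finite regular manifold cover if and only if it is not the teardrop $S^2(n)$ with one cone point nor the bad football $S^2(p,q)$ with $p\neq q$.

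For the forward direction, suppose $|\qpol|$ is a teardrop $S^2(n)$ and that $\pi:\tilde\qpol \to \qpol$ is a Galois cover by a trivially weighted polyhedron with group $\grp{G}$. Then $|\tilde\qpol|$ is a closed orientable surface covering $S^2(n)$, so $\grp G$ would act on $|\tilde\qpol|$ with isolated fixed points of rotation order $n$ above the cone point, and freely elsewhere. Using
\[
\chi(|\qpol|) = \#Q_0 - \#Q_1 + \sum_{c\in Q_2}\frac{1}{\ccyc_c}
\]
together with the multiplicativity $\chi(|\tilde\qpol|) = |\grp G|\cdot \chi(|\qpol|)$ and the constraint that the fixed points of nontrivial elements of $\grp G$ all lie above the single cone point, a short counting argument rules out any such $\grp G$-action on a closed orientable surface. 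The bad football case is handled analogously, with the additional observation that $\grp G$ must act transitively on the set of cone points, which is impossible when they have distinct orders.

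For the reverse direction I would construct the cover case by case according to the orbifold type of $|\qpol|$. When $|\qpol|$ has genus $\geq 1$, or when it has sphere topology with at least three cone points, the orbifold is good by Thurston and admits a finite regular manifold cover obtained from the uniformization (Euclidean or hyperbolic structure) together with Selberg's lemma applied to the orbifold fundamental group. For the remaining permitted spherical case $S^2(p,p)$, an order-$p$ rotation swapping the two cone points produces an explicit degree-$p$ cover by $S^2$. In each case one pulls back the cell decomposition of $\qpol$ along the orbifold cover to produce the trivially weighted polyhedral cover $\tilde\qpol$.

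The main obstacle is ensuring that the orbifold-theoretic cover lifts to a genuine Galois cover of quiver polyhedra in the strict sense used earlier, namely that the deck group $\grp G$ acts freely on the vertices of $\tilde\qpol$ and that the preimage of every face is a disjoint union of ordinary polygons. Both conditions can be arranged simultaneously by first passing, via Selberg's lemma, to a torsion-free finite-index normal subgroup of the orbifold fundamental group of $|\qpol|$; the covering corresponding to this subgroup is unramified away from the cone points, so its deck group acts freely on the 1-skeleton (and hence on vertices), while the pullback of each weighted face becomes a disjoint union of ordinary polygons, exactly as required for a trivially weighted quiver polyhedron.
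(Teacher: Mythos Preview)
Your approach coincides with the paper's: both reduce the statement to Thurston's classification of good versus bad closed orientable $2$-orbifolds via the correspondence between Galois covers of weighted quiver polyhedra and regular orbifold covers of $|\qpol|$. The paper simply cites this classification (theorem 13.3.6 in \cite{thurston}), whereas you sketch its proof and also spell out the passage from an orbifold cover back to a Galois cover of quiver polyhedra.

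Two remarks. First, your appeal to Selberg's lemma to arrange a free action on vertices is unnecessary: by construction the orbifold singularities of $|\qpol|$ lie at the centres of the faces, so any orbifold deck transformation already acts freely on the preimage of the $1$-skeleton, and in particular on the vertices of $\tilde\qpol$. This is what the paper uses implicitly. Second, your handling of the football $S^2(p,q)$ contains a slip: the claim that ``$\grp G$ must act transitively on the set of cone points'' is not correct --- there are no cone points in the manifold cover, and nothing forces the two cone points of the base into a single orbit of anything. The Euler-characteristic count you invoke for the teardrop already suffices here as well: a degree-$d$ manifold cover of $S^2(p,q)$ has $\chi = d(1/p+1/q)$, and since each cone point must be evenly covered one has $p\mid d$ and $q\mid d$; hence $\chi \ge (p+q)/\gcd(p,q)$, which exceeds $2$ unless $p=q$. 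With this correction your argument is complete and matches the paper's route.
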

\begin{proof}
Given an orbifold $X$ with a weighted quiver polyhedron $\qpol$ on it, we can use every orbifold cover $\tilde X \to X$
to obtain a Galois cover $\tilde \qpol \to \qpol$. If $\tilde X$ is a manifold then $\tilde \qpol$ is unweighted.
From theorem 13.3.6 in \cite{thurston} we know that in dimension 2 the only orientable orbifolds that cannot be covered by a manifold
are the sphere with 1 or 2 different orbifold points. These correspond to the quiver polyhedra described above.
\end{proof}
In accordance with the theory of orbifolds we call $\qpol$ \emph{developable} if it has an unweighted galois cover.
We will denote the unweighted cover of a weighted quiver polyhedron $\qpol$ by $\qpol^u$.

This cover can be used to check whether $\qpol$ admits a grading.
\begin{lemma} 
A weighted quiver polyhedron $\qpol$ admits a grading if and only if it is developable
and its unweighted cover admits a grading.
\end{lemma} 
\begin{proof}
Suppose $\qpol$ is not developable. Then it has the topology of a sphere and has 2 cycles $u_1,u_2$  
such that all other cycles are unweighted.
For any grading compatible with the Jacobi relations we have
\[
 \carr_{u_1} = \carr_{u_2} \mod \carr_u 
\]
with $u$ an unweighted cycle.
Indeed the cycles $u_1,u_2$ have the same homology class in the union of all unweighted faces. 
Because all these faces have the same degree $\carr_u$, the difference in degree between
$u_1$ and $u_2$ must be a multiple of $\carr_u$. 
But $\carr_{u_i}=\frac{\carr_{u}}{\ccyc_{u_i}}$  so the weights of $u_1$ and $u_2$ must be the same and hence
a positive grading is impossible.

A developable quiver polyhedron admits a positive grading if and only if its unweighted cover admits a positive grading by lemma \ref{covergrad}.
\end{proof}

For unweighted quiver polyhedra we can use Hall's theorem (see \cite{broomhead}[Remark 2.5.5]) to check whether a grading exists.
\begin{theorem}[Hall]
An unweighted quiver polyhedron $\qpol$ admits a grading if and only if
for any subset $S^+ \subset (Q^u)_2^+$ we have that if $S^-\subset (Q^u)^-_2$ is the set of cycles
connected to cycles of $S^+$ then
\[
|S^+|\ge |S^-|
\]
with equality only happening if $S^+$ is not a proper subset.
\end{theorem}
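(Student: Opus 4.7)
The plan is to reduce to an unweighted polyhedron via Lemma \ref{covergrad} and then recognise the grading condition as the existence of a strictly positive fractional perfect matching on the bipartite multigraph whose vertex sets are $Q_2^+$ and $Q_2^-$ and whose edges are indexed by the arrows. I split the argument into three parts: necessity of developability, necessity of the combinatorial inequality on the unweighted cover, and sufficiency.

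For necessity of developability, on a sphere with a single cone point of order $r>1$ the grading equations $\ccyc_c\carr_c=K$ force $\carr_c=K/\ccyc_c$; combined with the identity
\[
\sum_{c\in Q_2^+}\carr_c \;=\; \sum_{a\in Q_1}\carr_a \;=\; \sum_{c\in Q_2^-}\carr_c
\]
this collapses to an equation of the shape $|Q_2^+|-|Q_2^-|=\pm(1-1/r)$, which admits no integer solution. A parallel computation with two summands rules out the sphere with two cone points of distinct orders (the quantities $2-1/p-1/q$ and $1/p-1/q$ are never integers for $2\le p\neq q$). Hence a graded polyhedron must be developable, and by Lemma \ref{covergrad} it suffices to prove the equivalence on an unweighted cover $\qpol^u$.

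Necessity of the combinatorial inequality on $\qpol^u$ is then a direct summation. The grading condition reduces to $\sum_{a\in c}\carr_a=1$ for every $c\in Q_2^u$. Summing these equations over $S^+$ and over $S^-$, the arrow-support relation built into the definition of $S^-$ compares the two totals, and positivity of $\carr$ turns the inclusion of arrow-supports into the stated inequality; equality forces the two sums to range over the same arrows, which pins down $S^+=Q_2^+$.

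Sufficiency is the main content. I would apply Farkas' lemma to $A\carr=\mathbf{1}$, $\carr>0$, where $A$ is the cycle--arrow incidence matrix: infeasibility would supply a dual vector $\mathbf{y}\in\R^{Q_2}$ with $A^T\mathbf{y}\le 0$ and $\mathbf{y}^T\mathbf{1}>0$. Thresholding the entries of $\mathbf{y}$ on $Q_2^+$ and $Q_2^-$ and chasing the arrow-wise inequalities $y_c+y_{c'}\le 0$ across each arrow produces a subset $S^+\subset Q_2^+$ whose associated $S^-$ violates the hypothesised bound. The main obstacle here is to pick the threshold so that the strict/weak distinction in the combinatorial inequality matches the strict Farkas inequality, and in particular so that the boundary case where $\mathbf{y}$ is constant on each side is correctly identified with the forced equality $|Q_2^+|=|Q_2^-|$ coming from the $S^+=Q_2^+$ clause.
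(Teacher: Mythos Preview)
Your overall plan is sound and supplies considerably more than the paper does. For developability the paper argues topologically: on a sphere minus the two weighted faces $u_1,u_2$ the remaining unweighted faces all have the same $\carr$-degree $\carr_u$, and since $u_1$ and $u_2$ are homologous in that region their degrees satisfy $\carr_{u_1}\equiv\carr_{u_2}\pmod{\carr_u}$; combined with $\carr_{u_i}=\carr_u/\ccyc_{u_i}$ this forces $\ccyc_{u_1}=\ccyc_{u_2}$. Your arithmetic route via the global identity $\sum_{c\in Q_2^+}\carr_c=\sum_{a\in Q_1}\carr_a=\sum_{c\in Q_2^-}\carr_c$ is genuinely different, more elementary, and treats the one- and two-cone-point bad orbifolds uniformly; it is correct.

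For the combinatorial half the paper gives no argument at all, merely citing \cite{broomhead}~Remark~2.3.5, so your summation-plus-Farkas sketch is additional content rather than a comparison point. One caution: check the direction carefully. Under the natural reading of ``connected to'' as the neighbourhood $S^-=N(S^+)$ in the bipartite graph on $Q_2^+\sqcup Q_2^-$, the arrow-support inclusion runs $(\text{arrows of }S^+)\subset(\text{arrows of }S^-)$ and your summation yields $|S^+|\le|S^-|$, the ordinary Hall bound --- the \emph{reverse} of the printed inequality. The printed direction $|S^+|\ge|S^-|$ is only consistent with the dual reading $S^-=\{c'\in Q_2^-:\text{every arrow of }c'\text{ meets }S^+\}$. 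Either interpretation encodes the same strict-Hall criterion for a strictly positive fractional matching, and your Farkas/thresholding outline is the right mechanism; just make the intended reading of $S^-$ and the resulting sign explicit rather than writing ``the stated inequality''.
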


If a weighted quiver polyhedron $\tilde \qpol$ is developable, then the pullback of $\tilde \qpol$ under the universal cover map is called the \emph{universal cover} of $\tilde \qpol$.
This quiver is infinite if the Euler characteristic of $|\qpol|$ is zero or negative.
It still makes sense to define the corresponding category and category algebra, however one must take
care that the latter is not a unital algebra any more. We will denote the universal cover of $\qpol$ by
$\tilde \qpol$.

\section{The CY-3 property and quiver polyhedra}\label{maintheorem}

Jacobi algebras coming from quiver polyhedra appear naturally in the context of CY-3 algebras.
\begin{theorem}\label{qpol}
If a positively graded cancellation algebra $A$ is CY-3 then it comes from a graded weighted quiver polyhedron.
\end{theorem}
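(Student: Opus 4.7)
The plan is to extract a graded weighted quiver polyhedron directly from the minimal $\cC$-graded projective bimodule resolution of $A$ constructed in section \ref{GPR}. Since $A$ is positively graded the resolution is minimal, and property C1 forces it to have length exactly $3$:
\[
\bigoplus_{s \in \cS} F_s \xrightarrow{\delta_3} \bigoplus_{r \in \cR} F_r \xrightarrow{\delta_2} \bigoplus_{b \in Q_1} F_b \xrightarrow{\delta_1} \bigoplus_{i \in Q_0} F_i \xrightarrow{m} A.
\]
The functor $\Hom_{A-A}(-,A\otimes A)$ sends each $F_p$ to the same bimodule with the head and tail of $p$ swapped, so the CY-3 self-duality gives bijections $\cS \leftrightarrow Q_0$ and $\cR \leftrightarrow Q_1$. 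Write $s_i$ for the super-relation paired with vertex $i$ and $r_a$ for the relation paired with arrow $a$. Because $A$ is a cancellation algebra every relation is a difference of two paths, so $r_a = p_a - q_a$ where $p_a$ and $q_a$ both run from $h(a)$ to $t(a)$.

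I would next assemble a superpotential whose partial derivatives recover the $r_a$. The same self-duality that pairs the resolution with its shifted dual also identifies $\delta_2$ with the transpose of $\delta_1$: the multiplicity with which an arrow $b$ appears in $r_a$ is tied to the multiplicity with which $a$ appears in $r_b$, in exactly the pattern produced by $\partial_a$ of a cyclic expression. Grouping the paths $a\,p_a$ into cyclic equivalence classes forms $W^+$, and grouping the paths $a\,q_a$ into cyclic classes forms $W^-$. The requirement $\delta_2 \delta_3 = 0$, together with the bijection $\cS \leftrightarrow Q_0$, ensures that these paths really do close up into full cycles rather than stray arcs, and one reads off the repetition data as weights $\ccyc_c \in \N_{>0}$. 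This yields
\[
W = W^+ - W^- = \sum_{c \in Q_2^+}\frac{c^{\ccyc_c}}{\ccyc_c} - \sum_{c \in Q_2^-}\frac{c^{\ccyc_c}}{\ccyc_c}, \qquad \partial_a W = r_a,
\]
and in particular $A \cong A_W$ because $\cR \leftrightarrow Q_1$ is a minimal set of generators of the relation ideal.

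Condition PO is then tautological from the bijection $\cR \leftrightarrow Q_1$ and the form $r_a = p_a - q_a$: every arrow lies in exactly one cycle of $Q_2^+$ and one of $Q_2^-$. The bound $\ccyc_c |c| > 2$ follows from $W \in \cJ^3/[\C Q,\C Q]$, and positivity of $\carr$ combined with homogeneity of each $r_a$ forces the quantity $\carr_c \ccyc_c$ to be constant across cycles, making $\qpol$ graded. The main obstacle is the manifold condition PM---that the cycle-arrow incidence graph at each vertex $i$ is connected. The approach I would take is to argue that if this graph at $i$ decomposed into $k \geq 2$ components, then the kernel of $\delta_2$ in head-and-tail-bidegree $(i,i)$ would contain $k$ linearly independent classes that cannot be combined, producing at least $k$ elements of $\cS$ lying over $i$ and contradicting $\cS \leftrightarrow Q_0$. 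Turning this intuition into a rigorous argument requires a careful local analysis of $\delta_3$ at $s_i$ in terms of the cycles through $i$, and is the homological heart of the proof.
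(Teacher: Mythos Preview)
Your overall architecture matches the paper's: build the superpotential from the self-dual length-$3$ minimal resolution, then verify PM via a syzygy count and extract PO and the grading. Two points deserve correction.

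\textbf{The superpotential is not a direct consequence of abstract self-duality.} Knowing that $\Hom_{A-A}(\cP^\bullet,A\otimes A)[3]\cong \cP^\bullet$ tells you $|\cR|=|Q_1|$ and $|\cS|=|Q_0|$, but it does not by itself produce a specific pairing $a\mapsto r_a$ or the identity $\sum_a a\,r_a = \sum_b r_b\,b$ that underlies $W$. The paper isolates this in a separate lemma. It first shows, using the cancellation property in an essential way, that the components $f_r$ of $\delta_3$ are scalar multiples of single arrows (a priori they could be linear combinations of arrows of the same $\carr$-degree; cancellation rules this out because $ap$ and $bp$ cannot have the same $\cC$-degree for $a\ne b$). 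This is what furnishes the matching $r\mapsto a$. It then invokes property C3 --- the compatibility of the CY pairings with composition --- to prove that, after rescaling, the $r_a$ satisfy $\sum_{h(a)=i} a\,r_a = \sum_{t(b)=i} r_b\,b$, which is exactly the cyclic symmetry you need for $W$ to exist. Your sentence ``the multiplicity with which an arrow $b$ appears in $r_a$ is tied to the multiplicity with which $a$ appears in $r_b$'' is the right slogan, but the actual argument uses C3 and not just C1--C2.

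\textbf{PO is not tautological.} Once you know $\partial_a W = \lambda_a(p_a-q_a)$ you still have, for each arrow $a$, an arbitrary sign choice in labelling which path is $p_a$ and which is $q_a$. Your construction ``group the $a\,p_a$ into cyclic classes to form $W^+$'' only works if these choices are globally consistent: if $b$ lies in the cycle $a\,p_a$, you need $b\,p_b$ (rather than $b\,q_b$) to be cyclically equivalent to $a\,p_a$. The paper handles this by showing that two cycles sharing an arrow have opposite coefficients in $W$, and then uses PM together with strong connectedness of $Q$ to propagate this and conclude that $\coeff(Q_2)=\{\lambda,-\lambda\}$. In particular PO is established \emph{after} PM, not before; your ordering reverses this and the claim that PO ``is tautological from the bijection $\cR\leftrightarrow Q_1$'' is a genuine gap.

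Your PM argument is fine and is exactly what the paper does: each connected component $C$ of the incidence graph at $i$ yields an independent syzygy $z_C=\sum_{a\in C,\,h(a)=i} a\otimes\partial_a W\otimes 1-\sum_{a\in C,\,t(a)=i}1\otimes\partial_a W\otimes a$, and $|\cS|=|Q_0|$ forces a single component.
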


To prove this theorem we need a lemma which is an adaptation of a theorem from \cite{bocklandt}.
\begin{lemma}\label{gradedcalab}
If a positively graded cancellation algebra $A=\C Q/\cI$ is CY-3 then it is a Jacobi algebra of some superpotential $W$ and there exist a coefficients $\lambda_a\in \C$ depending on $a \in Q_1$ such that
$\partial_a W = \lambda_a (p-q)$ for some $p-q \in \cR$.
\end{lemma}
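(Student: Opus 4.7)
The plan is to apply the CY-3 condition to the minimal $\cC$-graded projective bimodule resolution $\cP^\bullet$ of $A$ recalled in Section \ref{GPR}. Positive gradedness makes $\cP^\bullet$ well-defined up to graded isomorphism, and the self-duality $\Hom_{A-A}(\cP^\bullet,A\otimes A)[3]\cong\cP^\bullet$ pairs the four terms: $\bigoplus_{i\in Q_0}F_i$ with $\bigoplus_{s\in\cS}F_s$, and $\bigoplus_{b\in Q_1}F_b$ with $\bigoplus_{r\in\cR}F_r$. Matching $\cC$-bidegrees yields a canonical bijection $a\mapsto r_a$ between arrows and a minimal set of generators of $\cI$, each $r_a$ determined by $a$ up to a nonzero scalar.

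Next I would use the cancellation hypothesis to show each $r_a$ can be chosen as a binomial. Any $\cC$-homogeneous element $x=\sum\mu_k p_k\in\cI$ has all paths $p_k$ representing the same morphism in $\cC$, so $x$ maps to $(\sum\mu_k)[p_1]$ in $A$, forcing $\sum\mu_k=0$; hence $x$ is a linear combination of binomials $p_i-p_j$. Minimality then permits the choice $r_a=\lambda_a(p_a-q_a)$ with $\lambda_a\ne 0$.

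The heart of the proof is to integrate these binomials into a single superpotential using the CY-3 self-duality of the middle differential $\delta_2$. With respect to the bases indexed by $Q_1$ on both sides, $\delta_2$ is encoded by a matrix $M=(M_{b,a})_{a,b\in Q_1}$ with entries in $A\otimes A$ computed from the splitting formula of Section \ref{GPR}: $M_{b,a}$ is the signed sum of prefix-suffix pairs around each occurrence of $b$ in $p_a$ and $q_a$. The isomorphism $\Hom(P_2,A\otimes A)[3]\cong P_1$ identifies $M$ with (a sign times) its transpose composed with the tensor flip, which combinatorially says that occurrences of $b$ in $r_a$ biject, with prefix and suffix swapped, with occurrences of $a$ in $r_b$. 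This is precisely the symmetry enjoyed by the cyclic derivatives of a single cyclic element, so I would define
\[
W:=\sum_c\nu_c\,c\in\C Q/[\C Q,\C Q]
\]
by reading the coefficient $\nu_c$ off $M$ along each cycle $c$; the transpose-flip symmetry makes $\nu_c$ independent of the chosen starting arrow. A direct computation from the definition of $\partial_a$ then yields $\partial_aW=\lambda_a(p_a-q_a)=\lambda_a r_a$, so $\cI=\langle\partial_aW:a\in Q_1\rangle$ and $A=A_W$.

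The main obstacle is this final combinatorial bookkeeping: translating the abstract transpose-flip self-duality of $\delta_2$ into the explicit arrow-occurrence matching that makes each $\nu_c$ well-defined. This is the graded, cancellation-algebra adaptation of the corresponding argument in \cite{bocklandt}; positive gradedness enters only to ensure the minimal resolution is well-behaved and that the cyclic sum defining $W$ is finite in each homogeneous degree.
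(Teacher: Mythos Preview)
Your strategy is the same as the paper's (which in turn adapts \cite{bocklandt}): exploit the self-duality of the minimal graded resolution to produce a bijection between $Q_1$ and a minimal relation set $\cR$, then integrate to a superpotential using the self-adjointness of the middle map. Where you and the paper diverge is in \emph{how} that bijection is established, and this is where your sketch is thinnest.

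The paper does not work directly with the bimodule self-duality. Instead it passes to the left-module resolution of each simple $S_i$, writes the top differential as a tuple $(f_r)_{r\in\cR}$ with $f_r\in A$, and then proves two things: first, a finite-dimensional quotient $M=A/(f_r,\,A_{\ge N})$ is forced by the CY Ext-duality to coincide with $A_0$, so the $f_r$ generate $\cJ$ and are thus congruent to linear combinations of arrows modulo $\cJ^2$; second, and this is the essential use of cancellation in the paper, no $f_r$ can involve two distinct arrows $a,b$, because $\cC$-homogeneity would force $[ap]=[bp]$ in $\cC$ and cancellation then gives $a=b$. Your assertion that ``matching $\cC$-bidegrees yields a canonical bijection $a\mapsto r_a$'' is the conclusion of this argument, not a self-evident consequence of the duality: the generator of $F_r^\vee$ has a $\cC$-degree determined by $r$ and the global grading shift of the duality, and it is not a priori an arrow. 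You would need either the paper's $M$-argument or an explicit unpacking of $\delta_3\cong\delta_1^\vee$ through the identifications $\Phi_k:\cP_{3-k}^\vee\to\cP_k$ to see this.

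Relatedly, you deploy cancellation to show each minimal relation is a binomial, but in the paper's setup this is already built in: $A$ is a category algebra, so $\cR$ is taken to consist of differences of paths from the outset (see \S\ref{GPR}). The paper's genuinely new use of cancellation is the one above, ruling out $f_r$ being supported on two arrows; in your framework this would correspond to showing the Krull--Schmidt matching of summands in $\cP_1\cong\cP_2^\vee$ is forced to be diagonal. So your ``main obstacle'' is misidentified: the bookkeeping for $\delta_2$ is straightforward once the bijection is in hand; the real work lies one step earlier.
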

\begin{proof}
We adapt the proof in \cite{bocklandt}[Theorem 3.1] which worked for an $\N$-graded algebra generated in degree $1$, to
this setting (where arrows can have different $\carr$-degree). 

As the global dimension of $A$ must be $3$, we know from section \ref{GPR} the minimal projective $\cC$-graded resolution of the trivial module
$S_i=Ai/\cJ$ with $\cC$-degree $i$ looks like
\[
\xymatrix{
P_\omega\ar@{^(->}[r]^{(f_r)}&\bigoplus_{t(r)=i}P_{r}\ar[r]^{(r\del b)}&\bigoplus_{t(b)=i}P_{b}\ar[r]^{(\cdot b)}&P_i\ar@{->>}[r]&S_i.
}
\]
In the diagram above the $r's$ are elements of the minimal set of relations $\cR$ and the $b's$ are arrows. Note that
the last term in the resolution $P_\omega$ must be isomorphic to $P_i$ because $\dim \Ext^3(S_i,S_j)\stackrel{CY}{=}\dim \Hom(S_j,S_i)=\delta_{ij}$.
This $P_i$ is shifted in $\cC$-degree, and we let $\omega$ be the path that corresponds to $i\in P_\omega$.

Consider the finite dimensional quotient algebra 
\[
M = A/(f_r: r \in \cR, A_n: n\ge N) \text{ where }\forall r :N> \carr_{f_r}.
\]
The Calabi-Yau property allows us to calculate the dimension of $iMj$:
\[
\Dim iMj = \Dim \Hom(P_i,Mj) = \Dim \Ext^3(S_i,Mj)\stackrel{CY}{=}\Dim \Hom(Mj,S_i) = \delta_{ij},  
\]
and conclude that $M$ must be isomorphic to the degree zero part of $A$. There are only as many $f_r$ as there are arrows 
($\dim \Ext^2(S_i,S_j)\stackrel{CY}{=}\dim \Ext^1(S_j,S_i)$). An $f_r$ (with $r=p-q$) cannot be a linear combination of different arrows $a$ and $b$
because this would imply that $\omega$, $ap$ and $bp$ have the same $\cC$-degree which contradicts the cancellation property. Hence, we can conclude that the $f_r$ must all be scalar multiples of arrows.

By rescaling our original relations, we can assume that the $f_r$ can be identified with the arrows. Let $r_a$ be the (nonzero) relation for which $f_{r_a}=a$. 

Because the resolution of $S_i$ is a complex we have that $\sum_{a} ar_a\del b \in \cI$ so we can write it as
\[
\sum_{h(a)=i} ar_a\del b = \sum g_{bc} r_c + rest\text{ with }rest \in \cJ\cI+\cI\cJ.
\]
If we apply property C3 we can conclude that $g_{bc}\ne 0 \iff b=c$. 
The terms $ar_a\del{b}$ all have the same $\cC$-degree which is equal to the degree of $r_b$.
As $r_b$ is a minimal relation, there are no $\cC$-homogeneous elements in $\cJ\cI+\cI\cJ$ with the same $\cC$-degree as $r_b$ and hence $rest=0$.

By introducing an appropriate rescaling of the relations we can assume that $g_{bb}=1$ and
\[
\sum_{h(a)=i} ar_a = \sum_{t(b)=i}r_bb.
\]
If we sum these equations we get a superpotential $W := \sum_a ar_a  = \sum_b r_b b$ and it is
clear that $iW$ is $\cC$-homogeneous. Note that $ar_a$ and $r_aa$ have the same $\carr$-degree
but sit in different parts of $W$ ($h(a)W$ and $Wt(a)$). We can use this together with the fact that $\qpol$ is 
strongly connected to show that $W$ is $\carr$-homogeneous.

Finally, because of the rescalings $r_a =\lambda_a (p-q)$ for some $\lambda_a \in \C$ and some $p-q \in \cR$.
\end{proof}

\begin{proof}[Proof of theorem \ref{qpol}]
Because $A$ is positively graded and CY-3 we know from lemma \ref{gradedcalab} that $A= A_W$ for
some superpotential $W$ and $\partial_{a} W=\lambda_a (p_a-q_a)$
for some scalar $\lambda_a$ and some relation $p_a-q_a \in \cR$. 

Every arrow occurs exactly in two cycles in $W$ ($ap_a$ and $aq_a$). 
If an arrow $a$ occurs in a cycle $c$ it can occur only once in this cycle or $c$ is a power of a smaller cyclic path containing just one $a$. If this were not the case, the partial derivative to $a$ of this cycle would contain more than one term with the same sign which is impossible. 

Let $Q_2$ be the set of all cycles $c$ such that a power $c^k$ occurs in $W$ and which are not powers of smaller cycles. 
The grading $\carr$ on $A$ gives a grading $\carr$ on the arrows and we define $\ccyc_c=k$ if and only if $c^k$ sits
in $W$. 

This data turns $Q$ into a weighted quiver polyhedron: 
\begin{itemize}
\item[PM] Fix a single vertex $i$ and consider the following graph $G_i$: its nodes correspond to the arrows
which have a head or a tail equal to $i$. There is an edge between two arrows $a,b$ with $t(a)=h(b)=i$ if
$ab$ is contained in a cycle of $W$. 

For every connected component $C \subset G_i$ we can construct a syzygy:
\[
z_C = \sum_{a \in C,h(a)=i} a\otimes \partial_a W \otimes 1-\sum_{a \in C,t(a)=i}1\otimes\partial_a W\otimes a.
\]
Indeed for every vertex $i$ the expression $\sigma_i := \sum_{h(a)=i} a\otimes\partial_a W\otimes 1 -\sum_{t(a)=i}1\otimes\partial_a W\otimes a$ is
a syzygy. We can split this syzygy in parts because the sets of arrows occurring in $z_{C_1}$ and $z_{C_2}$ for
two different components $C_1$ and $C_2$ are disjoint.
By the CY-3 property C2 we know that the third syzygies are in one to one correspondence with the vertices.
We can conclude that $G_v$ consist of one component. 
 \item[PO] Define the map $\coeff:Q_2\to \C$ such that $W = \sum_{c \in Q_2}\frac{\coeff(c)}{\ccyc_c}c^{\ccyc_c}$.
We will show that the image of this map is $\{\lambda,-\lambda\}$ for some $\lambda \in \C$.
We take $Q^\pm_2$ the preimage of $\pm \lambda$. Clearly if two cycles share an arrow $a$ then $\coeff(c_1)=-\coeff(c_2)=\lambda_a$.
So $\image \coeff=\{\lambda,-\lambda\}$ if we can go from one cycle to every other cycle by hopping over joint arrows.
This follows from condition PM and the fact that $Q$ is strongly connected.
\end{itemize}
The fact that $Q$ is strongly connected also implies that the $c^{\ccyc_c}$ have the same $\carr$-degree and because
$W \subset \cJ^3$ we must also have that $\ccyc_c|c|>2$. This implies that $\ccyc$ is a weighting for the quiver polyhedron and $\carr$ is a compatible grading.
\end{proof}

\section{Toric orders and dimer models}\label{cancellation}

In the previous section we proved that the CY-3 property for positively graded cancellation algebras implies the existence of
a weighted quiver polyhedron. Now we will prove that if we restrict to cancellation algebras that are toric orders, we
obtain that the quiver polyhedron must be unweighted and its underlying manifold $|\qpol|$ must be a torus. 

In order to prove this result we must first have a look at the cancellation property for quiver polyhedra.
\subsection{Cancellation for quiver polyhedra}
As we noted in section \ref{polyhedron} not all graded weighted quiver polyhedra give cancellation algebras. 

The relations in the Jacobi algebra $A_{\qpol}$ imply that all cycles in $Q_2$ are equivalent: ${c_1}^{\ccyc_{c_1}}p=p{c_2}^{\ccyc_{c_2}}$ for every $p$ with $h(p)=t(c_1)$ and $t(p)=h(c_2)$. This implies that the algebra $A$ has a central element: $\sum c^{\ccyc_c}$ where we sum over a subset representatives of $Q_2$ that contains just one cyclic path $c$  with $h(c)=i$ for every $i\in Q_0$. We will denote this central element by $\ell$. For every arrow $a$ we can find a path $p$ such that $ap=h(a)\ell$ and $pa=t(a)\ell$: just take
$p =\partial_a c^{\ccyc_c}$ where $c$ is a cycle in $Q_2$ containing $a$.

The cancellation property states that the map
\[
 A_{\qpol} \to A_{\qpol} \otimes_{\C[\ell]} \C[\ell,\ell^{-1}]
\]
is an embedding. This tensor product is the algebra obtained by making every arrow invertible (i.e. for every $a$ we have an $a^{-1}$ such that $aa^{-1}=h(a)$ and
$a^{-1} a=t(a)$). This algebra is the localization of $A_{\qpol}$ by the Ore set $\{\ell^k|k \in \N\}$ and we denote it by $\hat A_{\qpol}$. This definition makes also sense if $A_\qpol$ does not satisfy the cancellation property, the map $A_{\qpol} \to \hat A_{\qpol}$ is no more injective but $\hat A_{\qpol}$ is still a flat $A_{\qpol}$-module.

\begin{lemma}\label{localcy3}
If $A_{\qpol}$ is CY-3 then $\hat A_{\qpol}$ is also CY-3.
\end{lemma}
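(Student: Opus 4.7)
The plan is to show that localization at the central Ore set $\{\ell^k\}$ preserves the CY-3 property. Two key facts drive the argument: first, $\hat A_\qpol$ is flat as a two-sided $A_\qpol$-module because it is a localization at a central element (as is already noted just before the lemma); and second, the CY-3 hypothesis automatically forces the relevant projective bimodule resolution to be built from finitely generated pieces, so localization commutes with the Hom computations we need.

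First I would take a projective bimodule resolution $\cP^\bullet\to A_\qpol$ of length 3 witnessing the CY-3 property, with each term of the form $\bigoplus A_\qpol e_h\otimes e_tA_\qpol$ as in section~\ref{GPR}. Applying the exact functor $\hat A_\qpol\otimes_{A_\qpol}(-)\otimes_{A_\qpol}\hat A_\qpol$ term by term gives a complex $\hat\cP^\bullet\to\hat A_\qpol$ that remains exact by flatness and whose terms are projective $\hat A_\qpol$-bimodules of the form $\bigoplus \hat A_\qpol e_h\otimes e_t\hat A_\qpol$. Thus $\hat\cP^\bullet$ is a projective bimodule resolution of $\hat A_\qpol$ of length 3, which in particular gives the bound on global dimension.

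To transfer the self-duality I would exploit the natural isomorphism
\[
\Hom_{\hat A-\hat A}(\hat A_\qpol\otimes_{A_\qpol}P\otimes_{A_\qpol}\hat A_\qpol,\hat A_\qpol\otimes\hat A_\qpol)\cong\hat A_\qpol\otimes_{A_\qpol}\Hom_{A-A}(P,A_\qpol\otimes A_\qpol)\otimes_{A_\qpol}\hat A_\qpol,
\]
valid for any finitely generated projective $A_\qpol$-bimodule $P$ (both sides reduce to $e_h\hat A_\qpol\otimes\hat A_\qpol e_t$ when $P=A_\qpol e_h\otimes e_tA_\qpol$). Applied term-by-term to $\cP^\bullet$ and combined with the CY-3 isomorphism $\Hom_{A-A}(\cP^\bullet,A_\qpol\otimes A_\qpol)[3]\cong \cP^\bullet$, this yields $\Hom_{\hat A-\hat A}(\hat\cP^\bullet,\hat A_\qpol\otimes\hat A_\qpol)[3]\cong\hat\cP^\bullet$, which is the CY-3 property for $\hat A_\qpol$.

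The main obstacle is ensuring that each $\cP^i$ is finitely generated, so that the Hom–localization interchange above is valid. For $\cP^0$ and $\cP^1$ this is immediate since they are indexed by the finite sets $Q_0$ and $Q_1$. For $\cP^2$ and $\cP^3$ the generating sets $\cR$ and the set $\cS$ of section~\ref{GPR} could a priori be infinite, but the CY-3 duality itself forces them to be finite: $\Hom_{A-A}$ turns direct sums into products, so the self-duality $\cP^{3-k}\cong\Hom_{A-A}(\cP^k,A_\qpol\otimes A_\qpol)$ on the minimal resolution can match a direct sum of standard summands only if the original index set is finite. Hence finite generation of $\cP^0,\cP^1$ propagates to finite generation of $\cP^2,\cP^3$, and the argument goes through.
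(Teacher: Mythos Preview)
Your proof is correct and follows the same approach as the paper: localize the self-dual bimodule resolution using flatness of $\hat A_\qpol$ over $A_\qpol$. The paper's proof is three sentences and simply asserts that the tensored complex is still exact and self-dual; you have filled in the justifications the paper omits, in particular the Hom--localization interchange and the finite-generation issue for $\cP^2$ and $\cP^3$.
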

\begin{proof}
Let $P^\bullet$ be the bimodule resolution of $A_{\qpol}$ as a module over itself. 
The complex $\hat A_{\qpol} \otimes_{A_{\qpol}}P^\bullet \otimes_{A_{\qpol}}\hat A_{\qpol}$ is still exact because
$\hat A_{\qpol}$ is a flat $A_{\qpol}$-module. This implies that $\hat A_{\qpol}$ has a selfdual resolution
and is hence CY-3.
\end{proof}

It is important to note that $\hat A_{\qpol}$ always is a cancellation algebra
even when $A_{\qpol}$ is not. It is not always a CY-3 algebra, but in the case that $\qpol$ is graded and
$\chi_{\qpol}\le 0$ it will be even CY-3 if $A_{\qpol}$ is not. To prove this statement we first need to recall a well known lemma.

\begin{lemma}\label{homotop}
Let $\qpol$ be a weighted quiver polyhedron and $\carr:Q_1 \to \R$ be any (not necessarily positive) grading such that $\carr_\ell\ne 0$. Two paths in $\hat A_{\qpol}$ are equivalent if and only if they are homotopic and have the same $\carr$-degree.
\end{lemma}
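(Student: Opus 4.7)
The plan is to translate between equivalences in $\hat A_{\qpol}$ and relations in the orbifold fundamental groupoid $\Pi_1^{\mathrm{orb}}(|\qpol|)$. The bridge is the fact, already observed in the paper, that $c^{\ccyc_c}=\ell\cdot h(c)$ in $A_{\qpol}$ for every face $c$: this follows from the centrality of $\ell$ and the Jacobi identification of all full face powers at a common vertex. The orbifold analogue of this identity is $c^{\ccyc_c}=h(c)$ in $\Pi_1^{\mathrm{orb}}(|\qpol|)$, since the boundary of an orbifold face traversed $\ccyc_c$ times is null-homotopic.

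For the forward direction, I would observe that $p$ and $q$ are equivalent in $\hat A_{\qpol}$ precisely when $\ell^N p=\ell^N q$ holds in $A_{\qpol}$ for some $N\ge 0$, i.e.\ when one can be obtained from the other by a sequence of Jacobi relations $\partial_a W=0$ combined with multiplications by $\ell^{\pm 1}$. Each Jacobi relation identifies two paths that, together with $a$, bound the two orbifold faces adjacent to $a$, and so they represent the same morphism in $\Pi_1^{\mathrm{orb}}(|\qpol|)$; moreover it is homogeneous of degree $\carr_\ell-\carr_a$ because $\carr$ is a grading. Multiplication by $\ell^{\pm 1}$ applied to both sides of an identity clearly preserves degree and, since $\ell$ is a sum of null-homotopic orbifold loops $c^{\ccyc_c}$, preserves the homotopy class.

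For the backward direction, assume $p$ and $q$ are orbifold-homotopic and $\carr(p)=\carr(q)$. I would invoke the combinatorial presentation of $\Pi_1^{\mathrm{orb}}(|\qpol|)$ whose generators are the arrows of $Q$ together with formal inverses, and whose relations are $aa^{-1}=h(a)$, $a^{-1}a=t(a)$ and $c^{\ccyc_c}=h(c)$ for every $c\in Q_2$. Since $p=q$ holds in this groupoid, there is a finite sequence of such moves reducing one word to the other. All these moves make sense in $\hat A_{\qpol}$, where every arrow is invertible; the only difference is that each application of an orbifold face relation must be replaced by its algebraic version $c^{\ccyc_c}=\ell\cdot h(c)$, contributing a factor of $\ell^{\pm 1}$. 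Collecting all such factors produces an identity $p=\ell^{k} q$ in $\hat A_{\qpol}$ for some $k\in\mathbb{Z}$. Taking $\carr$-degrees of both sides gives $\carr(p)=k\carr_\ell+\carr(q)$, so the hypotheses $\carr(p)=\carr(q)$ and $\carr_\ell\ne 0$ force $k=0$ and hence $p=q$.

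The principal obstacle is making the combinatorial presentation of $\Pi_1^{\mathrm{orb}}(|\qpol|)$ rigorous and tracking the exponent of $\ell$ introduced by the substitutions in a way that is independent of the chosen sequence of moves. This requires $|\qpol|$ to be a good (developable) orbifold; fortunately the existence of a grading with $\carr_\ell\ne 0$ already implies developability, exactly as in the preceding theorem characterising graded polyhedra (the non-developable sphere cases force $\carr_\ell=0$).
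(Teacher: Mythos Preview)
Your proposal is correct and follows the same route as the paper's proof, only spelled out in more detail. The paper compresses the backward direction into one sentence: ``homotopies in the quiver polyhedron are generated by substituting paths $p\to q$ such that $pq^{-1}=\ell$, so homotopic paths can only differ by a factor $\ell^k$''; this is exactly your van~Kampen presentation of $\Pi_1^{\mathrm{orb}}(|\qpol|)$ with the face relations $c^{\ccyc_c}=h(c)$, together with your bookkeeping of the $\ell$-exponent accumulated when each orbifold face relation is replaced by its algebraic version $c^{\ccyc_c}=\ell\cdot h(c)$. Both arguments then finish by comparing $\carr$-degrees and using $\carr_\ell\ne 0$ to force $k=0$.

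One remark on your final paragraph: the developability worry is not actually needed for this lemma. The presentation of the orbifold fundamental groupoid by arrows, their inverses, and the relations $c^{\ccyc_c}=h(c)$ is a purely combinatorial (van~Kampen) fact about the orbifold CW structure of $|\qpol|$ and holds whether or not $|\qpol|$ admits a manifold cover; nothing in the substitution argument uses a lift to $\tilde\qpol$. Developability becomes relevant only later, in the identification of $\hat A_\qpol$ with a matrix ring over the group algebra of a $3$-manifold group.
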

\begin{proof}
It is clear that the relations $\partial_a W$ imply that equivalent paths are homotopic and must have the same $\carr$-charge.
Because homotopies in the quiver polyhedron are generated by substituting paths $p\to q$ such that
$pq^{-1}=\ell$, homotopic paths can only differ by a factor $\ell^k$. The degree of $\ell$ is not zero, so
if homotopic paths have the same degree they must be equal in $\hat A_{\qpol}$.
\end{proof}
\begin{remark}
By homotopic we mean homotopic as paths in $|\qpol|$ considered as an orbifold, not merely as a topological space.
\end{remark}

\begin{theorem}\label{fundgroup}
For any positively graded weighted quiver polyhedron $\qpol$, 
\[
\hat A_\qpol \cong \Mat_n(\C[\Pi]) 
\]
where $n$ is the number of vertices and $\C[\Pi]$ is the group algebra of the fundamental group of some compact 
three-dimensional manifold.
\end{theorem}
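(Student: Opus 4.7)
The plan is to begin by using Lemma \ref{homotop} to give a concrete description of the localized algebra. Fix a basepoint $j \in Q_0$ and a spanning tree of $Q$ rooted at $j$; the tree provides, for each vertex $i$, a preferred path from $j$ to $i$, and multiplication by these preferred paths gives $\hat A_\qpol \cong \Mat_n(B)$ with $B := j\hat A_\qpol j$. By Lemma \ref{homotop}, a basis of $B$ is indexed by pairs $(\gamma,d)$ where $\gamma$ is a homotopy class of oriented loops at $j$ in the 2-orbifold $|\qpol|$ and $d$ is the $\carr$-degree of some representative path; the ambiguity in $d$ for fixed $\gamma$ is a multiple of $\carr_\ell$, corresponding to inserting a power of $\ell$. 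Concatenation of loops plus addition of degrees turns this set into a group $\Pi$ fitting in a central extension
\[
1 \to \Z \to \Pi \to \pi_1^{\mathrm{orb}}(|\qpol|,j) \to 1,
\]
and $B = \C[\Pi]$.

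Next I would realize $\Pi$ as the fundamental group of a compact orientable 3-manifold $M$. Gradedness forces $\qpol$ to be developable (by the theorem at the end of Section \ref{polyhedron}), so $|\qpol|$ is a good 2-orbifold. Take $M$ to be the Seifert fibered space over $|\qpol|$ whose Seifert invariants are read off from $\carr$: the local invariants at a cone point of order $\ccyc_c$ are chosen so that the fiber class $t$ corresponds to $\ell$ and the local relation $t = c^{\ccyc_c}$ in $\pi_1(M)$ matches the Jacobi relation in $\hat A_\qpol$. Comparing the standard presentation of $\pi_1(M)$ as a central extension of $\pi_1^{\mathrm{orb}}(|\qpol|,j)$ by $\Z$ with our presentation of $\Pi$ then yields $\Pi \cong \pi_1(M)$, proving the first claim.

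For the second claim, Morita invariance reduces CY-3 of $\hat A_\qpol$ to CY-3 of $\C[\pi_1(M)]$, which holds precisely when $M$ is a closed aspherical orientable 3-manifold (equivalently, when $\pi_1(M)$ is a 3-dimensional orientable Poincaré duality group; the cellular complex of the universal cover then provides a selfdual length-3 resolution). If $\chi_\qpol \le 0$ the universal cover of $|\qpol|$ is $\R^2$ or $\HH^2$, so the universal cover of $M$ is contractible (of type $\R^3$, $\HH^2 \times \R$, or a twisted Seifert analogue), hence $M$ is aspherical and $\C[\pi_1(M)]$ is CY-3. If $\chi_\qpol > 0$, developability excludes the bad orbifold cases, leaving $|\qpol|$ spherical, so $M$ is covered by $S^3$ with finite deck group and $\C[\pi_1(M)]$ is semisimple of global dimension $0$. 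The main obstacle is constructing $M$ and matching its Seifert invariants explicitly to the data $(\ccyc,\carr)$, and in particular verifying that the group-theoretic presentation of $\pi_1(M)$ really agrees with the presentation of $\Pi$ coming from paths in $\hat A_\qpol$; once that identification is made, the CY-3 dichotomy follows from standard 3-manifold topology.
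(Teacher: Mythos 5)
Your overall strategy is the paper's: identify $\hat A_\qpol\cong\Mat_n(\C[\Pi])$ where $\Pi$ is a central extension $1\to\Z\to\Pi\to\pi_1^{\mathrm{orb}}(|\qpol|)\to 1$ recorded by homotopy class together with $\carr$-degree (Lemma \ref{homotop}), realize $\Pi$ as the fundamental group of a $3$-manifold fibering over $|\qpol|$, and decide CY-3 by contractibility of its universal cover. However, the step you yourself flag as ``the main obstacle'' --- constructing $M$ as a Seifert fibered space and matching its Seifert invariants and group presentation to $(\ccyc,\carr)$ --- is genuinely left open, and it is exactly the step the paper avoids: it defines $\Pi$ directly as the group of diffeomorphisms $\psi_p\colon(x,a)\mapsto(\phi_p(x),a+\carr_p)$ of $|\tilde\qpol|\times\R$, where $\phi_p$ is the deck transformation attached to the loop $p$, and sets $M:=(|\tilde\qpol|\times\R)/\Pi$. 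Lemma \ref{homotop} then gives in one stroke that $p\mapsto\psi_p$ is injective on loops at $i$ and that the action is free (a fixed point would force $\carr_p=0$), so $\pi_1(M)=\Pi=i\hat A_\qpol i$ with no presentation-matching required. You should replace your Seifert-invariant construction by this direct quotient, or else actually carry out the matching.

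There is also a concrete error in your $\chi_\qpol>0$ case: $M$ is \emph{not} covered by $S^3$ and $\pi_1(M)$ is \emph{not} finite. The fiber class is $\ell$, which has strictly positive $\carr$-degree and hence infinite order in $\Pi$; thus $\Pi$ is an infinite central extension of the finite group $\pi_1^{\mathrm{orb}}(|\qpol|)$ by $\Z$, the fibration has Euler number zero, and $M$ carries $S^2\times\R$ geometry with non-contractible universal cover $S^2\times\R$. In particular $\C[\Pi]$ is not semisimple of global dimension $0$. The failure of CY-3 must be argued differently, as in the paper: pass to the universal cover $\tilde\qpol$, whose associated manifold is $S^2\times S^1$ with fundamental group $\Z$, so that $\hat A_{\tilde\qpol}$ is Morita equivalent to $\C[\ell,\ell^{-1}]$, which is CY-1 rather than CY-3. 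Your conclusion is correct, but not for the reason you give.
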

\begin{proof}
Note that because of the gradedness $\qpol$ is developable, also without loss of generality we can assume that
the $\carr$-charges are rational.

Let $|\tilde \qpol| \to |\qpol|$ be the universal cover of the orbifold $|\qpol|$ and fix a vertex $i\in Q_0$. 
To every path in $p \in i\hat A_{\tilde \qpol} i$ corresponds an element in the fundamental group of $|\qpol|$, 
which gives a cover automorphism $\phi_p: |\tilde \qpol| \to |\qpol|$. 
Conversely, every element in the fundamental group can be represented by a path in $\qpol$.

Now consider the simply connected space $|\tilde \qpol| \times \Rl$ and consider the group of diffeomorphisms
\[
\Pi = \{\psi_p: |\tilde \qpol|\times \Rl \to |\tilde \qpol|\times \Rl: (x,a) \mapsto (\phi_p(x),a+\carr_p)| p \in \Hom_{\cC_{\hat{A}_{\qpol}}}(i,i) \}
\] 
By lemma \ref{homotop}, every element in $\Hom_{\cC_{\hat A_{\qpol}}}(i,i)$ gives a different diffeomorphism and
none of these diffeomorphisms has fixpoints. The quotient of $|\tilde \qpol|\times \Rl/\Pi$ is thus a manifold and
${i\hat A_{\qpol} i} \cong \C[\Pi] =\C[\pi_1(|\tilde \qpol|\times \Rl/\Pi)]$. The manifold $|\tilde \qpol|\times \Rl/\Pi$
projects down to the compact space $|\qpol|$. The fibers of this projection are isomorphic to
$\Rl/u\Z$ where $u=\min\{|\carr_p||\Hom_{\cC_{\hat{A}_{\qpol}}}(i,i)\}$ (this minimum exists because we assumed the 
$\carr$-charges rational) so $|\tilde \qpol|\times \Rl/\Pi$ is compact.

For every vertex $j$, fix a path $p_{j}:i \ot j$. Construct the following morphism
\[
 \Mat_n(i\hat A_{\qpol} i) \to \hat A_{\qpol}: qE_{uv} \mapsto p_u^{-1}qp_v
\]
where $E_{uv}$ is the matrix with one on the entry $(u,v)$ and zero everywhere else.
This morphism has an inverse
\[
 \hat A_{\qpol} \to \Mat_n(i\hat A_{\qpol} i): q \mapsto p_{h(q)}qp_{t(q)}^{-1} E_{h(q)t(q)}.
\]
\end{proof}

We recall a theorem by Kontsevitch.
\begin{theorem}[Kontsevich, see \cite{GB} Corollary 6.1.4]
The fundamental group algebra of a compact manifold is CY-n if it is orientable and its universal cover
is contractible. 
\end{theorem}
This theorem can be used to relate the Euler characteristic of the dimer model with the CY-3 property.
\begin{corollary}\label{nonpos}
Let $\qpol$ be any positively graded weighted quiver polyhedron.
\begin{itemize}
 \item $\hat A_\qpol$ is CY-3 if and only if $\chi(\qpol)\le 0$.
 \item if $A_\qpol$ is CY-3 then $\chi(\qpol)\le 0$.
\end{itemize}
\end{corollary}
\begin{proof}
We know that $\hat A_\qpol$ is Morita equivalent to the fundamental group algebra of some $3$-manifold.
This manifold has as universal cover $|\tilde \qpol|\times \Rl$. This is contractable when $\chi_{\qpol}\le 0$.
So by Kontsevich theorem $\hat A_\qpol$ is CY-3. 
If the quiver polyhedron $\qpol$ has positive Euler characteristic, then its universal cover $\tilde\qpol$ has the topology 
of a sphere and the quotient manifold of the cover is $\SS_2 \times \SS_1$.
The fundamental group is $\Z$ so $\hat A_{\tilde\qpol}$ is Morita equivalent to $\C[\ell,\ell^{-1}]$. This last algebra is
not CY-3.

If $A_\qpol$ is CY-3 then $\hat A_\qpol$ is also CY-3 so by the previous paragraph  $\chi(\qpol)\le 0$.
\end{proof}

\begin{theorem}\label{qpolto}
If a toric order $A$ is CY-3 then it comes from a positively graded unweighted quiver polyhedron on a torus (in other words a dimer model on a torus)
\end{theorem}
\begin{proof}

Because a toric order is cancellation we already know it comes from a weighted quiver polyhedron, so we only need to show that the weights are trivial and $\chi_{\qpol}=0$.

If $A_{\qpol}$ is a toric order then $A_\qpol\subset \hat A_\qpol \subset \Mat_n(\C[\Z^3])$
because $\ell$ is invertible in $\Mat_n(\C[\Z^3])$.
Hence, for every vertex $v$, the algebra $v\hat A_{\qpol}v$ is commutative.
This means that the fundamental group of the 3-manifold is commutative and by construction
the orbifold fundamental group of the 2-orbifold must also be commutative.

As $\chi_\qpol\le 0$, this is only the case if $|\qpol|$ is a torus. Indeed, if the fundamental group 
of $|\qpol|$ is abelian then the deck transformations of $|\tilde \qpol|\to|\qpol|$ cannot have fixpoints (as such
transformations do not commute with the fixpointless ones) this means $|\qpol|$ is a manifold (not an orbifold) and $\qpol$ is unweighted. 
The only compact surface with nonpositive Euler characteristic and abelian fundamental group is the torus.
\end{proof}
 
\subsection{Cancellation and Calabi-Yau}

The cancellation property and the Calabi-Yau property are very closely related.
For quiver polyhedra with $\chi_\qpol\le 0$, Ben Davison in \cite{davison} proved that cancellation implies CY-3.

\begin{theorem}[Davison]\label{canc}
The Jacobi algebra of a graded weighted quiver polyhedron with nonpositive Euler characteristic is CY-3 if it is a cancellation algebra.
\end{theorem}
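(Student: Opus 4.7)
\emph{Plan.} My approach is to construct the candidate CY-$3$ bimodule resolution $P^\bullet$ of $A_\qpol$ dictated by the superpotential $W$, show that after inverting the central element $\ell$ it becomes the natural resolution of $\hat A_\qpol$ guaranteed by Theorem \ref{fundgroup}, and finally to use the cancellation hypothesis to transfer exactness from $\hat P^\bullet$ back to $P^\bullet$. The complex will be visibly self-dual by design, so the entire content of the theorem reduces to showing it is exact.

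\emph{Step 1: the candidate complex.} Let $r_a := \partial_a W$ be the relation attached to the arrow $a$, and let $\ell := \sum_i c_i^{\ccyc_{c_i}}$ be the central element introduced at the start of Section \ref{cancellation}. Following the template of Section \ref{GPR}, these data assemble into a complex of projective $A_\qpol$-bimodules
\[
P^\bullet:\quad 0 \to \bigoplus_{i \in Q_0} F_i' \xrightarrow{\delta_3} \bigoplus_{a \in Q_1} F_{r_a} \xrightarrow{\delta_2} \bigoplus_{a \in Q_1} F_a \xrightarrow{\delta_1} \bigoplus_{i \in Q_0} F_i \xrightarrow{m} A_\qpol \to 0,
\]
where $F_i'$ denotes $F_i$ shifted in $\cC$-degree by $\ell$ and $\delta_3$ is built from formal derivatives of $\ell$. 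The symmetry $a \leftrightarrow r_a$ between arrows and relations, combined with the $\carr$-grading, makes $P^\bullet$ visibly self-dual under $\Hom_{A-A}(-,A\otimes A)[3]$.

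\emph{Step 2: exactness after inverting $\ell$.} Set $\hat P^\bullet := \hat A_\qpol \otimes_{A_\qpol} P^\bullet \otimes_{A_\qpol} \hat A_\qpol$. Under the isomorphism $\hat A_\qpol \cong \Mat_n(\C[\Pi])$ of Theorem \ref{fundgroup}, together with the Morita equivalence $\Mat_n(\C[\Pi]) \sim \C[\Pi]$, the complex $\hat P^\bullet$ becomes the $\Pi$-equivariant cellular bimodule resolution associated to the universal cover $|\tilde \qpol| \times \R$: vertices supply the $0$-cells, arrows the $1$-cells, relations $r_a$ the $2$-cells glued along the two paths $p_a$ and $q_a$, and the $\R$-thickening over each vertex supplies the $3$-cells. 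Since $\chi_\qpol \le 0$ makes $|\tilde \qpol| \times \R$ contractible, the cellular complex is exact, and hence so is $\hat P^\bullet$.

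\emph{Step 3 (main obstacle): descent.} The hard step is transferring exactness of $\hat P^\bullet$ to $P^\bullet$. Cancellation gives the embedding $A_\qpol \hookrightarrow \hat A_\qpol$ and, more strongly, implies that every projective bimodule $F_p$ is $\ell$-torsion-free, so the natural map $P^\bullet \to \hat P^\bullet$ is an injection. The induced short exact sequence
\[
0 \to P^\bullet \to \hat P^\bullet \to \hat P^\bullet/P^\bullet \to 0
\]
together with the acyclicity of $\hat P^\bullet$ identifies $H^k(P^\bullet)$ with $H^{k-1}(\hat P^\bullet/P^\bullet)$, reducing the problem to showing that the $\ell^\infty$-torsion quotient complex is acyclic. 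I would attack this by filtering by $\ell$-degree: the associated graded pieces decompose into local Koszul-type complexes indexed by the cells of $|\qpol|$, whose exactness follows from the manifold condition PM together with cancellation applied to the elementary relations $p_a = q_a$. Patching these local contracting homotopies into a global homotopy for $\hat P^\bullet/P^\bullet$ is the delicate point where the cancellation hypothesis is truly essential.
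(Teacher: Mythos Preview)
The paper does not actually prove this theorem. It is stated as Davison's result, and the only argument the paper supplies is the remark that the weighted case reduces to Davison's unweighted case via Galois covers: any graded weighted $\qpol$ is developable, one passes to the unweighted cover $\qpol^u$, applies Davison there, and then uses Theorem \ref{cancelcover} together with the observation that the CY-3 property transfers along Galois covers (the standard bimodule complex of $\qpol$ lifts to that of $\qpol^u$, and one is exact iff the other is). So your proposal is not a comparison against a proof in the paper but an attempt to reprove Davison's theorem from scratch.

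Your Steps 1 and 2 are a reasonable setup. The self-dual complex $P^\bullet$ is the standard one, and the fact that its localization $\hat P^\bullet$ is exact is essentially the content of Theorem \ref{fundgroup}: once $\hat A_\qpol \cong \Mat_n(\C[\Pi])$ with $\Pi$ the fundamental group of an aspherical $3$-manifold, the superpotential complex localizes to a resolution. (You should be aware that identifying the localized superpotential complex with the cellular chain complex of $|\tilde\qpol|\times\R$ is itself a nontrivial check, but it is not the main obstacle.)

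The genuine gap is Step 3, and you have correctly located it but not closed it. Knowing that $H^k(P^\bullet)$ is $\ell$-torsion (from exactness of $\hat P^\bullet$ and flatness of localization) and that the terms $F_p$ are $\ell$-torsion-free (from cancellation) does not by itself force $H^k(P^\bullet)=0$: a subquotient of a torsion-free module can have torsion. What you actually need is that the images of the differentials are $\ell$-saturated, i.e.\ $\ell x \in \mathrm{im}\,\delta_k \Rightarrow x \in \mathrm{im}\,\delta_k$, and this is exactly where Davison and Mozgovoy--Reineke do substantial combinatorial work on paths in the universal cover. Your proposed ``filtration by $\ell$-degree'' is not well-defined on $\hat P^\bullet/P^\bullet$ (the quotient is $\ell$-divisible, not $\ell$-graded), and the ``local Koszul-type complexes indexed by the cells of $|\qpol|$'' are not specified; in particular it is unclear what role PM plays or how cancellation on elementary relations $p_a=q_a$ would yield exactness of such a local complex. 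As it stands, Step 3 is a restatement of the difficulty rather than a resolution of it. If you want to pursue a direct proof, the cleanest route is the one the paper's remark points to: reduce to the unweighted case by a Galois cover and then invoke \cite{davison} (or reproduce Davison's argument that the Mozgovoy--Reineke consistency condition is forced by cancellation).
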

Although Davison proved this only in the case of dimer models (which are in our terminology the trivially weighted 
quiver polyhedra) his proof generalizes to the weighted case because we can cover any graded weighted quiver polyhedron 
by a graded unweighted quiver polyhedron.
Davison's work was a generalization of work by Mozgovoy and Reineke \cite{MR} which used an extra consistency condition. This extra condition turned out to be a consequence of the cancellation property.

It is not clear whether for quiver polyhedra with $\chi_\qpol\le 0$ the cancellation property is really equivalent to the CY-3 property.
There are no known examples of noncancellation CY-3 quiver polyhedra for which $\qpol$ is finite. There are however examples where $\qpol$ is infinite. 
We refer to the follow-up paper \cite{bocklandtconsistency} which discusses the different notions of consistency for quiver polyhedra. 

\section{Examples}\label{examples}

The most studied examples of quiver polyhedra come from dimer models. Dimer models are dual to unweighted quiver polyhedra.
A dimer model consists of a bipartite graph on a Rieman surface. The bipartiteness implies that vertices are coloured black and white in such a way that no vertices
of the same colour share an edge. Given a unweighted quiver polyhedron we construct
the dimer model by using the centers of the cycles in $Q_2^+$ as black vertices and   
the centers of the cycles in $Q_2^-$ as white vertices, two vertices are connected if their cycles share a face.

\begin{example}\label{pinchpoint}
The suspended pinchpoint \cite{FranHan}[section 4.1] is an example of a CY-3 algebra given by the following dimer model and quiver:
\[
\vcenter{
\xymatrix@C=.15cm@R=.125cm{
\ar@{-}[r]&\bk\ar@{-}[rdd]\ar@{-}[rrrr]&&&&\wt\ar@{-}[r]&\\
&&&\vtx{3}\ar@{.>}[rrr]\ar@{.}[u]\ar@{.>}[llld]&&&\vtx{2}\ar@{..}[u]\ar@{.>}[dd]\\
\vtx{1}\ar@{.>}[rrrdd]\ar@{.}[uu]&&\wt\ar@{-}[rr]&&\bk\ar@{-}[ruu]\ar@{-}[rdd]\ar@{-}[rr]&&\\
&&&&&&\vtx{3}\ar@{.>}[llld]\\
\ar@{-}[r]&\bk\ar@{-}[rdd]\ar@{-}[ruu]&&\vtx{1}\ar@{.>}[uuu]\ar@{.>}[llld]\ar@{.>}[rrrdd]&&\wt\ar@{-}[r]&\\
\vtx{2}\ar@{.>}[dd]\ar@{.>}[uuu]&&&&&&\\
\ar@{-}[rr]&&\wt\ar@{-}[rr]&&\bk\ar@{-}[ruu]\ar@{-}[rdd]&&\vtx{1}\ar@{.>}[llld]\ar@{.>}[uuu]\ar@{.}[dd]\\
\vtx{3}\ar@{.}[d]\ar@{.>}[rrr]&&&\vtx{2}\ar@{.>}[uuu]\ar@{.}[d]&&&\\
\ar@{-}[r]&\bk\ar@{-}[ruu]\ar@{-}[rrrr]&&&&\wt\ar@{-}[r]&
}}
\hspace{1cm}\vcenter{
\xymatrix{
&&&&\\
&&\vtx{1}\ar@(lu,ru)\ar@/^/[dr]\ar@/^/[dl]&&\\
&\vtx{2}\ar@/^/[ur]\ar@/^/[rr]&&\vtx{3}\ar@/^/[ul]\ar@/^/[ll]&
}}
\]
On the left we drew the tiling of the torus as a periodic tiling of the plane. 
The quiver is represented twice, once periodically on the left(the dotted lines) and once on the right.
There are three vertices in the quiver corresponding to the three tiles on the torus (one hexagon and two trapezia).
The sets of cycles are $Q_2^+=\{a_{31}a_{13}a_{11}, a_{21}a_{12}a_{23}a_{32}\}$ and $Q_2^-=\{a_{21}a_{12}a_{11}, a_{13}a_{31}a_{32}a_{23}\}$ and
the superpotential for this example is
\[
W = a_{31}a_{13}a_{11} + a_{21}a_{12}a_{23}a_{32} - a_{21}a_{12}a_{11}  - a_{13}a_{31}a_{32}a_{23} + [\C Q,\C Q]
\]
The arrows are indexed according to their head and tail: $h(a_{ij})=i$ and $t(a_{ij})=j$.
\end{example}

\begin{example}\label{notcancellation}
An example of a quiver polyhedron that does not give a cancellation algebra is
\[
 \xymatrix@C=.5cm@R=.5cm{
\vtx{1}\ar[rrr]^x\ar[ddr]_{b_1}&&&\vtx{1}\ar[ld]_{c_1}\\
&&\vtx{2}\ar[ull]^{a_1}\ar[ddr]^{a_2}&\\
&\vtx{3}\ar[ru]_d\ar[ld]_{c_2}&&\\
\vtx{1}\ar[rrr]_x\ar[uuu]^y&&&\vtx{1}\ar[ull]_{b_2}\ar[uuu]_y
}
\]
We identify all vertices labeled $\xymatrix{\vtx{1}}$ to obtain an unweighted quiver polyhedron on a torus.
The grading is done by giving all arrows degree 1.
This is not a cancellation algebra because one can check that
\[
xy \ne yx \text{ but } xy\ell=xa_1db_1y =ya_2db_2x= yx\ell.
\]
\end{example}

\begin{example}\label{octahedron}
The same quiver polyhedron can be weighted differently to obtain different Jacobi algebras.
\[
\xymatrix@C=.5cm@R=.75cm{
\vtx{1}\ar[dd]\ar[rrr]&&&\vtx{2}\ar[dll]\ar@{.>}[dl]\\
&\vtx{5}\ar[ul]\ar[drr]&\vtx{6}\ar@{.>}[ull]\ar@{.>}[dr]&\\
\vtx{4}\ar[ur]\ar@{.>}[urr]&&&\vtx{3}\ar[uu]\ar[lll]
}
\]
Use the same indexing convention as in the first example and set 
\se{
Q_2^+&=\{a_{15}a_{52}a_{21},a_{43}a_{35}a_{54},a_{41}a_{16}a_{64},a_{23}a_{36}a_{62}\}\\
Q_2^-&=\{a_{16}a_{62}a_{21},a_{43}a_{36}a_{64},a_{41}a_{15}a_{54},a_{23}a_{35}a_{52}\}.
}
This polyhedron is an octahedron and has the topology of a sphere. 

\begin{itemize}
 \item 
It can be given a trivial weighting if we give the arrows degree $1$.
In this case you get a cancellation algebra (there is at most one path between every pair of vertices of a given degree) 
but it is not CY-3 because $\chi_\qpol$ is $2$.
\item
It can be equipped with a nontrivial weighting by giving  
cycles containing $a_{41}$ or $a_{23}$ weight $2$ and the rest weight $1$.
The grading gives $a_{21}$ or $a_{43}$ degree $4$ and the rest degree $1$. 
This weighting gives rise to an orbifold with Euler characteristic $0$.
One can check using results from \cite{bocklandtconsistency} (i.e. intersecting zigzag paths) that this does not give you a cancellation algebra.
\item
We can equip this with a nontrivial weighting by giving  
all cycles weight $2$ and all arrows degree $1$.
This weighting gives rise to an orbifold with Euler characteristic $-2$.
The nonintersection of zigzag paths tells us that this is a cancellation algebra and a CY-3 algebra.
\end{itemize}

\end{example}

\begin{example}\label{ASregular}
Weighted quiver polyhedra can also be used to describe certain Artin-Schelter regular algebras.
Take the following quiver polyhedron on the sphere
\begin{center}
\begin{tikzpicture}
\draw (1.2,0) arc (0:360:1.2);
\draw (0,0) node {$\xymatrix{\vtx{}\ar@(ru,lu)_x^3\ar@(l,d)_y^3\ar@(d,r)_z^3}$};
\end{tikzpicture}
\end{center}
where the backside is a triangle bounded by $x,y$ and $z$.
Then $A_\qpol=\C\<x,y,z\>/\<x^2-yz,y^2-zx,z^2 -xy\>$ which is a well-known three-dimensional Artin Schelter regular ring \cite{ASreg}.
The center of this ring is isomorphic to $\C[u^3,v^3,w^3,uvw]$, which is the quotient singularity of the $\grp G = \Z_3\times \Z_3$-action
$gu = \eta^i u$, $gv = \eta^{i+j} v$, $gw = \eta^{i+2j}w$ where $g=(i,j)\in \grp G$ and $\eta$ is a primitive cube root. 
It is a cancellation algebra and CY-3, but take care, this algebra $A_\qpol$ cannot be seen as a noncommutative crepant resolution over its center because a commutative crepant resolution
has a rank 9 K-group, while the K-group of $A_\qpol$ is only rank 1.
\end{example}

\bibliographystyle{amsplain}
\def\cprime{$'$}
\providecommand{\bysame}{\leavevmode\hbox to3em{\hrulefill}\thinspace}
\providecommand{\MR}{\relax\ifhmode\unskip\space\fi MR }
\providecommand{\MRhref}[2]{%
  \href{http://www.ams.org/mathscinet-getitem?mr=#1}{#2}
}
\providecommand{\href}[2]{#2}

\end{document}